\newtheorem{thm}{Theorem}[section]
\newtheorem{cor}[thm]{Corollary}
\newtheorem{prop}[thm]{Proposition}
\newtheorem{lem}[thm]{Lemma}
\theoremstyle{definition}
\theoremstyle{remark}
\let\c@equation\c@thm
\newcommand{\cE}{\mathcal{E}}
\newcommand{\bF}{\mathbf{F}}
\newcommand{\bK}{\mathbf{K}}
\newcommand{\cK}{\mathcal{K}}
\newcommand{\cL}{\mathcal{L}}
\newcommand{\bN}{\mathbf{N}}
\newcommand{\cO}{\mathcal{O}}
\newcommand{\bP}{\mathbf{P}}
\newcommand{\bQ}{\mathbf{Q}}
\newcommand{\cQ}{\mathcal{Q}}
\newcommand{\bR}{\mathbf{R}}
\newcommand{\cR}{\mathcal{R}}
\newcommand{\rR}{\mathrm{R}}
\newcommand{\cS}{\mathcal{S}}
\newcommand{\cT}{\mathcal{T}}
\newcommand{\cU}{\mathcal{U}}
\newcommand{\bv}{\mathbf{v}}
\newcommand{\dsum}{\bigoplus}
\newcommand{\Gr}{\mathbf{Gr}}
\newcommand{\injects}{\hookrightarrow}
\newcommand{\surjects}{\twoheadrightarrow}
\newcommand{\RN}[1]{%
  \textup{\uppercase\expandafter{\romannumeral#1}}%
  }
\numberwithin{equation}{section}
\newcommand{\arxiv}[1]{\href{http://arxiv.org/abs/#1}{{\tt arXiv:#1}}}
\title{Equations of Kalman Varieties}
\author{Hang Huang}
\date{May 17, 2017}
\begin{document}

\begin{abstract}
The Kalman variety of a linear subspace is a vector space consisting of all endomorphisms that have an eigenvector in that subspace. We resolve a conjecture of Ottaviani and Sturmfels and give the minimal defining equations of the Kalman variety over a field of characteristic $0$.
\end{abstract}

\maketitle

\section{Introduction}

Let $V$ be a vector space over a field of arbitrary characteristic. For a subspace $L \subsetneqq V$, we can look at all the matrices that have a nonzero eigenvector in $L$. This is called Kalman variety. We define a more general version of it and give some basic properties in Section~\ref{subsec:kaldefn}. Motivated by Kalman's observability condition in control theory \cite{kalman}, Ottaviani and Sturmfels studied their algebraic and geometric properties in \cite{sturmfels}.

In particular, Ottaviani and Sturmfels find minimal generators for the prime ideal of the Kalman variety when dim $L$ = 2. 
The minimal resolution in the case $\dim L = 2$ and minimal generators for the prime ideal in the case $\dim L = 3$ is obtained by Sam in \cite{steven}. Furthermore, he conjectured the existence of a long exact sequence involving the Kalman variety and their higher analogues and proved his conjecture in the case $\dim L = 2,3$. Our main results involve proving Sam's conjecture when we assume the ground field is of characteristic 0 (Theorem~\ref{thm:les}) and use that to get minimal generators for the prime ideal of the Kalman variety in general over a field of characteristic 0 (Corollary~\ref{cor:mineqn}). 

The main tool is the geometric approach to free resolutions via sheaf cohomology (Section~\ref{subsec:KempfVanishing}). This is not a straightforward application since the Kalman variety is not normal if $\dim L > 1$. So the approach only gives information about the normalization of the Kalman variety. But through the existence of the long exact sequence we proved (Theorem~\ref{thm:les}), we are able to extract information about the Kalman variety using information about normalization of all generalized Kalman varieties we got in the geometric approach.

The Kalman variety should be a good testing ground for studying the equations and free resolutions of non-normal varieties. In particular, the approach in Section~\ref{subsec:KempfVanishing} is known to work effectively to study the nilpotent orbits of type A in Lie theory \cite[Chapter 8]{weyman} since they are all normal. But outside type A there are many nilpotent orbits that are not normal and very little is known about the equations and free resolutions of them. Hopefully the insights gained from studying the Kalman varieties will be useful in more complicated situations.

The outline of the article is as follows. In Section~\ref{sec:prelim}, we summarize the properties of Kalman varieties that we will be using, as well as necessary constructions and theorems that we will need. In Section~\ref{sec:repn}, we prove the existence of the long exact sequence and use that to get the minimal defining equations of the Kalman Variety as representations. In Section~\ref{sec:eqn}, we identify those representations with actual minimal defining equations for the Kalman variety.

\subsection*{Acknowledgements.} 

The author would like to send thanks to Steven Sam for bringing this problem into consideration. The author would also like to send special thanks to him for all the fruitful conversations during the writing and conception of this work.

\section{Preliminaries and Notations} \label{sec:prelim}

\subsection{Geometric Approach to Syzygies} \label{subsec:KempfVanishing}

Fix a base field $K$. Let $X$ be a projective variety and $U$ be a vector space. Let $\cS \subset U \times X$ be a subbundle of the trivial bundle whose quotient bundle we denote $\cT$. Let $Z$ be the total space of $\cS$. We have the following map:
\[\xymatrix{
  Z \ar[d]  & \subseteq & U \times X \ar[d]^{p_1} \ar[r]^{p_2} & X \\
  Y := p_1(Z) & \subseteq & U &  \\
} \]

where $p_1$ and $p_2$ are projections. Let $\xi = \cT^*$ and the coordinate ring for $U$ to be $A = \text{Sym}(U^*)$ with grading defined by deg($U^*$) = 1. We denote the ring $A$ with a grading shift to be $A(-i)_d = A_{d-i}$.

\begin{thm} \label{thm:kempf}
We can take a locally free resolution of $\cO_Z$ over $\cO_{U \times X}$ which is given by the Koszul complex $\bK_{\bullet} = \bigwedge^{\bullet} \xi$. Then there exists a sequence of free $A$-modules $\bF_{\bullet}$ which is a minimal free graded representative of $\rR (p_1)_* \bK_{\bullet} = \rR (p_1)_* \cO_Z$ where
\begin{align*}
\bF_i = \dsum_{j \geq 0} H^j (X; \bigwedge^{i+j} \xi) \otimes_{K} A(-i-j).
\end{align*}
Furthermore, if all higher direct image of Sym($\xi^*$) vanish and $p_1 \mid_Z$ is birational, then $\bF_{\bullet}$ is a minimal free resolution of the normalization of $\cO_Y$ which we denoted by $\widetilde{\cO_Y}$.
\end{thm}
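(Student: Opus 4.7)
The plan is to resolve $\cO_Z$ on $U \times X$ by a Koszul complex, push down to $U$, and extract a minimal free representative; under the additional hypotheses the resulting complex will resolve $\widetilde{\cO_Y}$. I first verify that $Z \injects U \times X$ is cut out by a regular section of $p_2^*\cT$. The quotient map $U \times X \to \cT$ of bundles on $X$ pulls back to a surjection $p_2^*(U \times X) \surjects p_2^*\cT$ on $U \times X$, and composing with the tautological section of $p_2^*(U \times X)$ whose value at $(u,x)$ is $u$ yields a global section $s$ of $p_2^*\cT$ whose zero locus is exactly $Z$. Because $Z$ has the expected codimension $\operatorname{rk} \cT$ in $U \times X$, the section is regular, and the Koszul complex $\bK_\bullet = \bigwedge^\bullet p_2^*\xi$ with differential given by contraction with $s$ is a locally free resolution of $\cO_Z$.

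Next I compute $\rR(p_1)_*\bK_\bullet \simeq \rR(p_1)_*\cO_Z$. Since $p_1 \colon U \times X \to U$ is the projection of a trivial $X$-bundle over the affine base $U = \operatorname{Spec} A$, flat base change gives $R^j(p_1)_*(p_2^*\bigwedge^i\xi) \simeq H^j(X, \bigwedge^i\xi) \otimes_K A$ as free $A$-modules. Applying $(p_1)_*$ to a Cartan--Eilenberg resolution of $\bK_\bullet$ then produces a double complex of free graded $A$-modules whose total complex represents $\rR(p_1)_*\cO_Z$; because the Koszul differential is linear in the coordinates of $U^*$, each horizontal step raises the internal $A$-degree by one, which accounts for the twist $A(-i-j)$ in the claimed formula. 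A minimal free graded representative $\bF_\bullet$ is then extracted by standard minimization, and summing bidegrees along the antidiagonal $i+j = \text{const}$ recovers the asserted formula $\bF_i = \bigoplus_{j\geq 0} H^j(X, \bigwedge^{i+j}\xi) \otimes_K A(-i-j)$. The key check is that no cancellation between distinct bidegrees can occur during minimization, which holds because the Koszul differential and all higher spectral-sequence differentials strictly raise internal $A$-degree, so contributions to each internal degree come from a unique bidegree.

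Finally, assume $p_1|_Z$ is birational and all higher direct images of $\operatorname{Sym}(\xi^*)$ vanish. Combined with the spectral sequence above, the vanishing hypothesis forces $R^{>0}(p_1)_*\cO_Z = 0$, so the derived pushforward reduces to $(p_1)_*\cO_Z$ concentrated in cohomological degree zero. Since $Z$ is the total space of a vector bundle over the smooth projective variety $X$ and is therefore normal, and $p_1|_Z$ is proper and birational onto $Y$, Stein factorization identifies $(p_1)_*\cO_Z$ with the integral closure $\widetilde{\cO_Y}$ of $\cO_Y$ in its function field. Hence $\bF_\bullet$ is a minimal free resolution of $\widetilde{\cO_Y}$, as claimed. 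The main obstacle, I expect, will be making precise the translation from vanishing of higher direct images of $\operatorname{Sym}(\xi^*)$ to vanishing of $R^{>0}(p_1)_*\cO_Z$; I would handle this by invoking the standard geometric-method machinery (cf.\ \cite[Chapter 5]{weyman}) relating cohomology of symmetric and exterior powers of complementary bundles.
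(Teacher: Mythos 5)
The paper itself gives no proof of this theorem---it is cited verbatim from Weyman's book (Theorems 5.1.2--5.1.3), which is the standard Kempf--Lascoux--Weyman geometric technique. Your proposal reconstructs that argument rather than citing it, which is a reasonable thing to attempt; the overall strategy (Koszul resolution of $\cO_Z$, pushforward, minimization, then the birational--plus--vanishing step) is exactly right.

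The one place where your argument is genuinely soft is the ``no cancellation'' step. You assert that a Cartan--Eilenberg resolution of $\bK_\bullet$ pushes down to a double complex of \emph{free} graded $A$-modules in which \emph{all} differentials strictly raise internal $A$-degree; neither half of this is automatic. Injective sheaves on $U\times X$ do not push forward to free $A$-modules, and if you instead use a \v{C}ech resolution (which does give free, if infinitely generated, $A$-modules), the \v{C}ech differential preserves internal $A$-degree rather than raising it---only the Koszul differential raises it. So the total complex is not minimal, and the claim that ``contributions to each internal degree come from a unique bidegree'' does not by itself rule out cancellations against the degree-zero \v{C}ech part. The clean way to pin down $\bF_i$ is to compute $\bF_i\otimes_A K\cong H^{-i}\bigl(\rR(p_1)_*\cO_Z\otimes^L_A K\bigr)$ directly: since each $\bK_p$ is $\cO_{U\times X}$-flat, the derived restriction to the fibre $\{0\}\times X$ is just $\bK_\bullet|_{\{0\}\times X}$, and the Koszul differential, being linear in $U$, vanishes there. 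Thus $\rR(p_1)_*\cO_Z\otimes^L_A K\simeq\bigoplus_p\rR\Gamma(X,\bigwedge^p\xi)[-p]$, whose $(-i)$-th cohomology is $\bigoplus_{j\geq 0}H^j(X,\bigwedge^{i+j}\xi)$, sitting in internal degree $i+j$. This simultaneously produces the asserted summands and the twist $A(-i-j)$, with no appeal to degeneration or to properties of a specific resolution. Your final paragraph is fine in outline; note only that, since $\xi=\cT^*$ so $\xi^*=\cT$, the hypothesis as printed in the paper (``$\mathrm{Sym}(\xi^*)$'') does not literally give $R^{>0}(p_1)_*\cO_Z=0$---what one needs (and what Weyman's $\eta$ denotes) is $\mathrm{Sym}(\cS^*)$, whose higher cohomology on $X$ computes $R^{>0}(p_1)_*\cO_Z$ via $(p_2)_*\cO_Z\cong\mathrm{Sym}(\cS^*)$; your reading of the intended hypothesis is correct, but worth flagging the notational slip.
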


\begin{proof}
See \cite[Theorem 5.1.2,5.1.3]{weyman}.
\end{proof}

\subsection{Flag Varieties} \label{subsec:flag}

Given sequence of positive integers $\bv = (v_1,v_2)$, we will use the notation Flag($\bv,L$) to denote the partial flag variety. The typical point of Flag($\bv,L$) is a sequence of subspaces of $L$
\begin{align*}
0 \subset R_{v_1} \subset R_{v_1 + v_2} \subset L.
\end{align*}
The subscript of each subspace denotes its dimension. We will use the symbol $\cR_i$ to denote the tautological subbundle of dimension $i$ on the flag variety. $\cQ_i$ denotes the corresponding quotient bundle. Thus on a flag variety we have the tautological exact sequence $0 \rightarrow \cR_i \rightarrow L \rightarrow \cQ_i \rightarrow 0$. $L$ denotes here the trivial bundle.

\subsection{Kalman Variety} \label{subsec:kaldefn}

We denote $V$ our basic vector space of dimension $n$. $K$ is our base field. We will also fix a subspace $L \subsetneqq V$ of dimension $d$ and pick an integer $1 \leq s \leq d$. The Kalman variety is defined as follows:
\[
\cK_{s,d,n} = \{ \varphi \in \text{End}(V) \mid \exists U \subset L \text{ such that dim}(U) = s \text{ and } \varphi(U) \subset U   \}.
\]
To get set theoretically defining equations, we can pick an ordered basis for $V$ such that the first $d$ vectors form an ordered basis for $L$. In this case, we write the matrix $\varphi$ into a block matrix form: 
$\begin{pmatrix}
\alpha & \beta \\
\gamma & \delta
\end{pmatrix}$. The reduced Kalman matrix is defined by:
\begin{align*}
\begin{pmatrix}
\gamma \\
\gamma \alpha \\
\vdots \\
\gamma \alpha^{d-1}
\end{pmatrix}.
\end{align*}
Then according to \cite[Theorem 4.5]{sturmfels}, the ideal $I_{s,d,n}$ generated by $(d-s+1) \times (d-s+1)$ minors of reduced Kalman matrix defines $\cK_{s,d,n}$ set theoretically. The equations can be easily shown to be not minimal.

Note that using notations in Section~\ref{subsec:KempfVanishing}, we can set 
  \begin{itemize}
  \item $Y = \cK_{s,d,n}$,
  \item $X = \mathbf{Gr}(s,L)$ is the Grassmannian consisting of $s$ dimensional subspaces of $L$,
  \item $U = \text{End}(V)$,
  \item $\cS = \{ (\varphi,U) \mid \varphi (U) \subseteq U  \}$.
  \end{itemize}
In this case, $\xi = \cR_s \otimes (\cQ_s^* \oplus W)$ where $W = (V/L)^*$. According to \cite[Proposition 2.1]{steven}, if char($K$) = 0, higher direct images of $\cS$ vanish and $\bF_{\bullet}$ is a minimal free graded representative of $\tilde{\cO}_{s,d,n}$.

$\cK_{s,d,n}$ is an irreducible variety whose codimension is $s(n-d)$. For $s=d$, $\cK_{s,d,n}$ is defined by $\gamma = 0$ and hence resolved by a Koszul complex. If $s < d$, the non-singular locus of $\cK_{s,d,n}$ = non-normal locus of $\cK_{s,d,n}$ = $\cK_{s+1,d,n}$ \cite[Theorem 4.4]{sturmfels}. In particular if $n > d+1$, $\cK_{s,d,n}$ is not Cohen-Macaulay by Serre's criterion for normality. 
Even though the Kalman varieties are of determinantal type in general, they are not Cohen-Macaulay varieties when $\dim V - 1 > \dim L > 1$. So the resolution is not obtained from the Eagon-Northcott complex.
When $s = 1$ and $n = d+1$, $\cK_{1,d,d+1}$ is a hypersurface and it is Cohen-Macaulay. We know that $I_{1,2,n}$ is prime \cite[Theorem 3.2]{sturmfels} and the minimal free resolution of $\cK_{1,2,n}$ is given in \cite[Theorem 3.3]{steven} for arbitrary characteristic of $K$. According to \cite[Theorem 3.6]{steven}, we can conclude that $I_{1,3,n}$ is prime in arbitrary characteristic. But we do not know the case in general. In this paper, we show that over a field of characteristic $0$, $I_{1,d,n}$ is prime.

\subsection{Schur Functors and Skew Schur Functors} \label{subsec:schur}

If we have a partition $\lambda = (\lambda_1,\ldots,\lambda_n)$, we denote $l(\lambda) = $ the largest $i$ such that $\lambda_i \neq 0$. If $\sum_i \lambda_i = n$, we write $|\lambda|=n$. The dual partition $\lambda^T$ is defined by $\lambda^T_i = \# \{ j \mid \lambda_j \geq i \}$. If the sequence $\lambda = (\lambda_1,\ldots,\lambda_n)$ has repetitions we use exponential notation, for example if $\lambda = (3,3,3,1,1,0,0,0)$ then we write $\lambda=(3^3,1^2,0^3)$.

Let $R$ be a commutative ring and let $U$ be a free $R$-module of finite rank $n$. We define the determinant of $U$ to be det$U$ = $\wedge^n U$. 
We are dealing with characteristic $0$ case. We denote the Schur functor to be $S_{\lambda} U$. Also if $\lambda = (\lambda_1,\ldots,\lambda_n)$ is a dominant integral weight for $GL(n)$, which is the same as a weakly decreasing integer sequence, we have $S_{\lambda} U = S_{(\lambda_1 - \lambda_n, \lambda_2 - \lambda_n,\ldots,0)} U \otimes (\text{det } U)^{\otimes \lambda_n}$. The functor $S_{\lambda}$ is compatible with base change. Hence it makes sense to construct $S_{\lambda} \cU$ when $\cU$ is a locally free sheaf on a scheme.

We will also need skew Schur functors, which have the following interpretation. They are defined for a pair of partitions $(\lambda,\mu)$ where $\mu \subset \lambda$. Equivalently, we want $\mu_i \leq \lambda_i$ for all $i$.  According to \cite[Proposition 2.1.9]{weyman},
\begin{equation*}
S_{\lambda^T / \mu^T} U = \bigwedge^{\lambda_1 - \mu_1} U \otimes \bigwedge^{\lambda_2 - \mu_2} U \otimes \ldots \otimes \bigwedge^{\lambda_s - \mu_s} U / R(\lambda / \mu, U) 
\end{equation*}
where $s = l(\lambda)$ and $ R(\lambda / \mu, U) $ is spanned by the subspaces:
\begin{equation*}
\bigwedge^{\lambda_1 - \mu_1} U \otimes \ldots \otimes \bigwedge^{\lambda_{a-1} - \mu_{a-1}} U \otimes R_{a,a+1}(U) \otimes \bigwedge^{\lambda_{a+2} - \mu_{a+2}} U \otimes \ldots \otimes \bigwedge^{\lambda_s - \mu_s} U
\end{equation*}
for $1 \leq a \leq s - 1$, where $R_{a,a+1}(U)$ is the free $R$ module spanned by the images of the following maps $\theta(\lambda / \mu,a,u,v;U)$ with $u+v < \lambda_{a+1} - \mu_a$:
\[\xymatrix{
  \bigwedge^u U \otimes \bigwedge^{\lambda_a - \mu_a -u + \lambda_{a+1} - \mu_{a+1} - v} U \otimes \bigwedge^v U \ar[d]^{1 \otimes \Delta \otimes 1}  &  \\
  \bigwedge^u U \otimes \bigwedge^{\lambda_a - \mu_a -u} U \otimes \bigwedge^{\lambda_{a+1} - \mu_{a+1} - v} U \otimes \bigwedge^v U \ar[d]^{m_{12} \otimes m_{34}} & \\
  \bigwedge^{\lambda_a - \mu_a} U \otimes \bigwedge^{\lambda_{a+1} - \mu_{a+1}} U   & \\
} \]
Here $\Delta$ is the comultiplication map and $m_{ij}$ is the multiplication of the $i$th tensor component with the $j$th one.

\subsection{Bott's Theorem} \label{subsec:Bott}
For calculation of cohomology of homogeneous vector bundles on the Grassmannian we will use the following version of Bott's Theorem. Given a permutation $\omega$ of size $d$, we define the length of $\omega$ to be $l(\omega) = \# \{i<j \mid \omega (i) > \omega (j) \}$. Furthermore, define $\rho = (d-1,d-2,\ldots,1,0)$. Given any sequence of integers $\alpha$ of length $d$, define $\omega \bullet \alpha = w(\alpha + \rho) - \rho$.

\begin{thm}
Suppose that the characteristic of $K$ is 0. Let $\alpha$,$\beta$ be two partitions and set $\nu = (\alpha,\beta)$. Then exactly one of the following situations occur.
\begin{enumerate}
	\item There exist $\omega \neq$ id such that $\omega \bullet \nu = \nu$. Then all cohomology of $S_{\alpha} \cQ_s \otimes S_{\beta} \cR_s$ vanishes.
	\item There is a unique $\omega$ such that $\eta = \omega \bullet \nu$ is a weakly decreasing sequence. Then $H^{l(\omega)}(\mathbf{Gr}(s,L); S_{\alpha} \cQ_s \otimes S_{\beta} \cR_s) = S_{\eta} L$ and all other cohomology vanish.
\end{enumerate}
\end{thm}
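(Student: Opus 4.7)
The plan is to deduce the statement from the standard Borel--Weil--Bott theorem on the full flag variety of $GL(L)$, by pushing a line bundle forward along a natural projection. First, I would realize $\Gr(s,L)$ as the quotient $G/P$ where $G = GL(L)$ and $P$ is the maximal parabolic stabilizing the tautological $s$-dimensional subspace; the Levi quotient of $P$ is $GL(\cR_s) \times GL(\cQ_s)$, and $S_\alpha\cQ_s \otimes S_\beta\cR_s$ is the homogeneous vector bundle associated to the irreducible $P$-module $S_\alpha(K^{d-s}) \otimes S_\beta(K^s)$, inflated trivially through the unipotent radical. Viewing $\nu = (\alpha,\beta)$ as a character of the standard maximal torus of $GL(L)$, I let $\cL_\nu$ denote the corresponding line bundle on the full flag variety $G/B$.

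Second, I would verify that $R\pi_*\cL_\nu = S_\alpha\cQ_s \otimes S_\beta\cR_s$ is concentrated in degree zero, where $\pi \colon G/B \to G/P$ is the natural projection. The fibres of $\pi$ are products of full flag varieties in dimensions $s$ and $d-s$, and since $\alpha,\beta$ are partitions the restriction of $\cL_\nu$ to each fibre is attached to a weight dominant for the Levi. Classical Borel--Weil (equivalently Kempf vanishing, which is unproblematic in characteristic zero) then identifies its global sections with $S_\alpha(K^{d-s}) \otimes S_\beta(K^s)$ and annihilates the higher cohomology on every fibre. Promoting this $G$-equivariantly via flat base change delivers the required identification of $R\pi_*\cL_\nu$, and the Leray spectral sequence then collapses to
\[
H^i(\Gr(s,L);\, S_\alpha\cQ_s \otimes S_\beta\cR_s) \;\cong\; H^i(G/B;\, \cL_\nu).
\]

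Finally, I would invoke the standard Borel--Weil--Bott theorem for $G/B$ (see \cite[Ch.~4]{weyman}). The Weyl group of $GL(L)$ is $S_d$ acting by permutation on $d$-tuples, and the dot action $\omega\bullet\nu = \omega(\nu+\rho)-\rho$ is precisely the one defined in the statement. A non-identity $\omega$ satisfies $\omega\bullet\nu = \nu$ if and only if $\nu+\rho$ has a repeated entry (fixed by the transposition of those two positions), and in that case Borel--Weil--Bott gives the vanishing of all cohomology, yielding case~(1). Otherwise $\nu+\rho$ has distinct entries, a unique $\omega \in S_d$ sorts them into strictly decreasing order, the resulting $\eta := \omega\bullet\nu$ is weakly decreasing, and Borel--Weil--Bott produces $H^{l(\omega)}(G/B;\, \cL_\nu) = S_\eta L$ with all other cohomology zero, which is case~(2). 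The only step beyond bookkeeping is the pushforward identification of the second paragraph; in characteristic zero it is classical, so no serious obstacle arises, and the theorem is in essence a repackaging of Borel--Weil--Bott in the form convenient for the Grassmannian computations later in the paper.
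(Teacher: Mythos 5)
Your argument is correct and is precisely the standard derivation that underlies the result cited in the paper; the paper itself gives no proof but simply refers to \cite[Corollary 4.1.9]{weyman}, and that corollary is obtained exactly as you describe, by pushing the line bundle $\cL_\nu$ forward along $G/B \to G/P$, using relative Borel--Weil on the fibres (products of smaller flag varieties) to collapse the Leray spectral sequence, and then invoking Bott on $G/B$. Two minor points worth being careful about if you wrote this out in full: the identification $R\pi_*\cL_\nu \cong S_\alpha\cQ_s \otimes S_\beta\cR_s$ depends on the sign and ordering conventions for the weight $\nu=(\alpha,\beta)$ and for the tautological bundles (Weyman's conventions differ from some other sources by a dualization), so one must check that the chosen $\cL_\nu$ really produces $S_\alpha\cQ_s \otimes S_\beta\cR_s$ and not, say, its dual; and the claim that $\alpha,\beta$ partitions implies Levi-dominance of the restricted weight is what guarantees no higher fibrewise cohomology, so the hypothesis that both are partitions is essential at that step. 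With those caveats your sketch is sound and matches the intent of the reference.
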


\begin{proof}
See \cite[Corollary 4.1.9]{weyman}.
\end{proof}

\subsection{Cauchy's Formula} \label{subsec:Cauchy}

In our following proof we will use this formula that gives the decomposition of the exterior powers of $E \otimes F$ in terms of Schur functors.
\begin{align*}
\bigwedge^p (E \otimes F) = \sum_{|\lambda| = p} S_{\lambda} E \otimes S_{\lambda^T} F \\
\end{align*}

\begin{proof}
See \cite[Theorem 2.3.2]{weyman}.
\end{proof}

\section{Equations of Kalman Varieties as Representations} \label{sec:repn}

We assume char $K = 0$ throughout the paper. Let $n$ = dim($V$) and $d$ = dim($L$). We will compute minimal defining equations for Kalman varieties as $GL(L) \times GL(W)$ representations using the following theorem.

\begin{thm} \label{thm:les}
Fix $d$, assume char $K$ = 0. For $s = 1, \ldots,d$, let $B_s = \tilde{\cO}_{s,d,n}(-\frac{s(s-1)}{2})$. There is a long exact sequence
\begin{equation*}
0 \longrightarrow \mathcal{O}_{1,d,n} \longrightarrow B_1 \longrightarrow B_2 \longrightarrow \ldots \longrightarrow B_d \longrightarrow 0.
\end{equation*}
\end{thm}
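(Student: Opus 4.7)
The plan is to construct the maps $d_s : B_s \to B_{s+1}$ explicitly via incidence geometry on two-step flag varieties, verify $d_{s+1} \circ d_s = 0$, and then establish exactness by a local rank-matching argument on each stratum $\cK_{s,d,n} \setminus \cK_{s+1,d,n}$ combined with depth control coming from the geometric method of Theorem~\ref{thm:kempf}.

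First I would construct $d_s$ by working over the incidence variety $\mathcal{W}_s \subset \mathrm{Flag}(s, s+1; L) \times \mathrm{End}(V)$ where $\varphi$ preserves both $\cR_s$ and $\cR_{s+1}$. The two forgetful projections $\mathcal{W}_s \to \mathcal{Z}_s$ and $\mathcal{W}_s \to \mathcal{Z}_{s+1}$ (for $\mathcal{Z}_k$ the total spaces from Section~\ref{subsec:kaldefn}), combined with a twist by a power of the line bundle $\cR_{s+1}/\cR_s$ arising from the relative dualizing sheaf of the forgetful maps, yield a natural map $\tilde{\cO}_{s,d,n} \to \tilde{\cO}_{s+1,d,n}(-s)$. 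The exponent $s = \binom{s+1}{2} - \binom{s}{2}$ is precisely the twist needed to make $d_s : B_s \to B_{s+1}$ degree-preserving. I would then prove $d_{s+1} d_s = 0$ by lifting the composition to $\mathrm{Flag}(s, s+1, s+2; L)$ and observing that it factors through a wedge of the line bundles $\cR_{s+1}/\cR_s$ and $\cR_{s+2}/\cR_{s+1}$ in which a sign cancellation forces vanishing.

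For exactness, I would stratify $\cK_{1,d,n}$ by $\cK_{s,d,n} \setminus \cK_{s+1,d,n}$ and compute stalks generically on each stratum. At a generic $\varphi \in \cK_{s,d,n} \setminus \cK_{s+1,d,n}$ there is a unique $s$-dimensional $\varphi$-invariant subspace $U \subset L$, and every other $\varphi$-invariant subspace of $L$ must lie inside $U$: otherwise its sum with $U$ would be invariant of dimension $>s$ inside $L$, forcing $\varphi \in \cK_{s+1,d,n}$. Generically $\varphi|_U$ has distinct eigenvalues, so there are exactly $\binom{s}{k}$ many $k$-dimensional $\varphi$-invariant subspaces in $L$, and hence $B_k$ has local rank $\binom{s}{k}$ at $\varphi$. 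The binomial identity $1 + \sum_{k \geq 1}(-1)^{k}\binom{s}{k} = 0$ for $s \geq 1$ matches $\mathrm{rank}\,\cO_{\cK_{1,d,n}} = 1$, giving pointwise numerical exactness on every stratum. Using the explicit minimal free resolutions of each $\tilde{\cO}_{s,d,n}$ given by Theorem~\ref{thm:kempf}, one sees that each $B_s$ has enough depth as an $A$-module that any homology of $B_\bullet$ would be forced to have codimension larger than the codimension of every stratum, contradicting the generic rank vanishing. Finally, $\ker(d_1) = \cO_{\cK_{1,d,n}}$ because sections coming from $\cO_{\cK_{1,d,n}}$ are exactly those constant on the preimages of points under $\pi_1$ and are hence killed by the alternating-sum-type map $d_1$, while the containment is forced to be an equality by rank matching at every stratum.

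\textbf{Main obstacle.} The hardest step is promoting generic numerical exactness on each stratum to actual exactness of the complex $B_\bullet$. This requires careful depth/Cohen--Macaulay control of each $\tilde{\cO}_{s,d,n}$ and heavy use of the Kempf--Bott vanishing from Section~\ref{subsec:Bott} to track which Schur-functor pieces actually appear in $\bigwedge^j \xi_s$ on each $\mathbf{Gr}(s,L)$. A subtle secondary difficulty is verifying that the maps $d_s$ produced by the incidence construction are nonzero on every Schur isotypic component on which they need to be nonzero for exactness; this is ultimately a $\mathrm{GL}(L) \times \mathrm{GL}(W)$-equivariant bookkeeping problem, but one that can only be resolved after the Schur decompositions from Bott's theorem have been carried out explicitly for each $B_s$.
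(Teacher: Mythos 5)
Your outline identifies the correct geometric source of the maps (a two-step incidence over $\mathbf{Flag}(s,s+1;L)$ with a twist by a power of $\cR_{s+1}/\cR_s$), and this does match the paper's construction in Lemma~\ref{lem:maps} in spirit. Your generic rank check on the strata $\cK_{s,d,n}\setminus\cK_{s+1,d,n}$, via the binomial identity $\sum_k(-1)^k\binom{s}{k}=0$ and the count of $\varphi$-invariant subspaces inside a unique $s$-dimensional invariant $U\subset L$, is a nice consistency check and is correct as far as it goes. However, you correctly flag the key step---promoting this generic numerical exactness to actual exactness of the complex of $A$-modules---and this is precisely where the proposal has a genuine gap.

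The depth argument you gesture at is exactly what fails here. The modules $\tilde\cO_{s,d,n}$ are \emph{not} Cohen--Macaulay when $d>1$ and $n>d+1$ (as noted in Section~\ref{subsec:kaldefn}, the Kalman varieties are not CM and the non-normal locus coincides with the singular locus), and their normalizations do not satisfy the depth bounds that an acyclicity-lemma or Peskine--Szpiro type argument would require. So ``each $B_s$ has enough depth that homology is pushed into high codimension'' is not available; in fact the lack of depth is the central difficulty the entire approach of the paper is designed to circumvent. Relatedly, the map $\tilde\cO_{s,d,n}\to\tilde\cO_{s+1,d,n}(-s)$ is \emph{not} a naive incidence correspondence on structure sheaves: the relevant pushforward along $\pi_2$ lives in top degree and must be accessed through $\rR^s(\pi_2)_*$ and Serre duality, and the map is really built at the level of minimal free resolutions, not of the modules themselves. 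Consequently, the claims that $d_{s+1}d_s=0$ follows from a ``wedge of line bundles'' sign cancellation on a three-step flag, and that $\ker d_1=\cO_{1,d,n}$ follows from an ``alternating-sum'' picture, are plausible heuristics but do not straightforwardly attach to the actual maps.

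What the paper does instead is to define intermediate $A$-modules $C_s\subseteq\tilde\cO_{s,d,n}$ generated by specified graded pieces, with $C_1=\cO_{1,d,n}$ and $C_{d+1}=0$, and prove by downward induction on $s$ the short exact sequences $0\to C_s\to\tilde\cO_{s,d,n}\to C_{s+1}(-s)\to 0$ (Proposition~\ref{prop:ses}). The induction is driven by the explicit Bott-theoretic bookkeeping in Lemma~\ref{lem:coh} together with the identification of summands $\bF_i^{(\RN{1})}\leftrightarrow\bF_i^{(\RN{2})}(-s)$ provided by Lemma~\ref{lem:maps}. The long exact sequence in Theorem~\ref{thm:les} is then obtained by splicing these short exact sequences (after the grading shift by $\frac{s(s-1)}{2}$). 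This bypasses both the $d^2=0$ verification and the need for any depth control; exactness is built in from the short exact sequences. To salvage your approach you would need a genuinely different mechanism for exactness---for instance an Euler-characteristic comparison against the explicit Betti tables---but at that point you would be reproducing the paper's resolution-level bookkeeping anyway.
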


\begin{lem}
We can split the term $\mathbf{F}_i$ of the minimal free resolution $\mathbf{F_{\bullet}}$ of $\tilde{\mathcal{O}}_{s,d,n}$ into three parts:
\begin{align*}
\bF_i = \bF_i^{(\RN{1})} \oplus \bF_i^{(\RN{2})} \oplus \bF_i^{(\RN{3})}
\end{align*}
where
\begin{align*}
\bF_i^{(\RN{1})} =   &\bigoplus_{p = 0}^{s(d-s)} \bigoplus_{\substack{\mu \subseteq s \times (d-s) \\ \mu \subseteq \lambda \subseteq s \times (n-s) \\ |\lambda| = p \\ l(\mu) = s}} H^{p-i}(\mathbf{Gr}(s,L); S_{\lambda}\mathcal{R}_s \otimes S_{\mu^T} \mathcal{Q}_s^*) \otimes S_{\lambda^T / \mu^T }W \otimes A(-p) \\
\bF_i^{(\RN{2})} =  &\bigoplus_{p = 0}^{s(d-s)} \bigoplus_{\substack{\mu \subseteq (s-1) \times (d-s) \\ \mu \subseteq \lambda \subseteq (s-1) \times (n-s) \\ |\lambda| = p }} H^{p-i}(\mathbf{Gr}(s,L); S_{\lambda}\mathcal{R}_s \otimes S_{\mu^T} \mathcal{Q}_s^*) \otimes S_{\lambda^T / \mu^T}W \otimes A(-p) \\
\bF_i^{(\RN{3})} = &\bigoplus_{p = 0}^{s(d-s)} \bigoplus_{\substack{\mu \subseteq (s-1) \times (d-s) \\ \mu \subseteq \lambda \subseteq s \times (n-s) \\ |\lambda| = p \\ l(\lambda) = s }} H^{p-i}(\mathbf{Gr}(s,L); S_{\lambda}\mathcal{R}_s \otimes S_{\mu^T} \mathcal{Q}_s^*) \otimes S_{\lambda^T / \mu^T }W \otimes A(-p).
\end{align*}
When i = 0, we have
\begin{align*}
\bF_0^{(\RN{1})} = & \bigoplus_{p = 0}^{s(d-s)} \bigoplus_{\substack{\mu \subseteq s \times (d-s) \\ |\mu| = p \\ l(\mu) = s}} A(-p) \\
\bF_0^{(\RN{2})} = & \bigoplus_{p = 0}^{s(d-s)} \bigoplus_{\substack{\mu \subseteq (s-1) \times (d-s) \\ |\mu| = p}} A(-p) \\
\bF_0^{(\RN{3})} = & 0.
\end{align*}
\end{lem}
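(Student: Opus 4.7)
Set $X = \Gr(s,L)$ and $\xi = \cR_s \otimes (\cQ_s^* \oplus W)$ as in Section~\ref{subsec:kaldefn}. Theorem~\ref{thm:kempf} gives
\[
\bF_i = \bigoplus_{j \geq 0} H^j\bigl(X;\, \bigwedge^{i+j}\xi\bigr) \otimes_K A(-i-j),
\]
so the plan is to decompose $\bigwedge^p\xi$ for $p=i+j$ as a $\GL(L)\times\GL(W)$-equivariant sheaf, then regroup the resulting cohomology by the lengths of the indexing partitions.

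The decomposition proceeds in two stages. First, applying Cauchy's formula (Section~\ref{subsec:Cauchy}) to $\xi = \cR_s \otimes (\cQ_s^*\oplus W)$, and using that $\cR_s$ has rank $s$ and $\cQ_s^*\oplus W$ has rank $n-s$, gives
\[
\bigwedge^p\xi \cong \bigoplus_{\substack{|\lambda|=p \\ \lambda\subseteq s\times(n-s)}} S_\lambda\cR_s \otimes S_{\lambda^T}(\cQ_s^*\oplus W).
\]
Second, the standard expansion of a Schur functor of a direct sum via skew Schur functors (a consequence of the Littlewood--Richardson rule) gives
\[
S_{\lambda^T}(\cQ_s^*\oplus W) \cong \bigoplus_{\mu^T\subseteq\lambda^T} S_{\mu^T}\cQ_s^* \otimes S_{\lambda^T/\mu^T}W,
\]
where the rank of $\cQ_s^*$, combined with $\mu\subseteq\lambda\subseteq s\times(n-s)$, forces $\mu\subseteq s\times(d-s)$. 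Since $W$ is the trivial bundle on $X$, the factor $S_{\lambda^T/\mu^T}W$ passes through cohomology, so assembling with $p=i+j$ yields
\[
\bF_i \cong \bigoplus_{p}\ \bigoplus_{\substack{\mu\subseteq\lambda\subseteq s\times(n-s)\\ \mu\subseteq s\times(d-s)\\ |\lambda|=p}} H^{p-i}\bigl(X;\, S_\lambda\cR_s \otimes S_{\mu^T}\cQ_s^*\bigr) \otimes S_{\lambda^T/\mu^T}W \otimes A(-p).
\]

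The three-part splitting now follows by partitioning admissible pairs $(\lambda,\mu)$ by the lengths $l(\mu)$ and $l(\lambda)$ into three disjoint cases: (I)~$l(\mu)=s$, which forces $l(\lambda)=s$ since $\mu\subseteq\lambda$; (II)~$l(\mu)\le s-1$ and $l(\lambda)\le s-1$; and (III)~$l(\mu)\le s-1$ and $l(\lambda)=s$. Collecting the summands of each type reproduces $\bF_i^{(\mathrm{I})}$, $\bF_i^{(\mathrm{II})}$, and $\bF_i^{(\mathrm{III})}$ as stated.

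For the $i=0$ formulas, I apply Bott's theorem (Section~\ref{subsec:Bott}) to evaluate $H^{|\lambda|}(X;\, S_\lambda\cR_s \otimes S_{\mu^T}\cQ_s^*)$. Writing $S_{\mu^T}\cQ_s^* \cong S_{\widetilde\mu}\cQ_s$ with $\widetilde\mu = (-\mu^T_{d-s},\ldots,-\mu^T_1)$, the relevant $\GL(L)$-weight is $\nu=(\widetilde\mu,\lambda)$. The expectation is that $\nu+\rho$ sorts into a weakly decreasing sequence by a unique Weyl element $\omega$ of length exactly $p=|\lambda|$ precisely when $\lambda=\mu$, and that in this case $\omega\bullet\nu=0$, so the cohomology is a single copy of the trivial representation $K$, producing one copy of $A(-p)$. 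Since Case~(III) forces $l(\lambda)>l(\mu)$ and therefore $\lambda\ne\mu$, this gives $\bF_0^{(\mathrm{III})}=0$; the surviving summands in Cases~(I) and~(II) are exactly the stated ones. The main obstacle is this Bott analysis: verifying that any extra box in $\lambda$ beyond $\mu$ either creates a coincidence in $\nu+\rho$ (so case~(1) of Bott forces vanishing) or produces a sorting permutation of length strictly less than $|\lambda|$. The combinatorial heart is that the inversions of $\omega$ record the boxes of $\mu$ being shifted past the $\cR_s$-block under $\rho$, so the cohomological degree matches $|\lambda|$ exactly when $\lambda=\mu$ but falls short otherwise.
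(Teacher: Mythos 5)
Your argument is correct and follows the same route as the paper: Cauchy's formula applied to $\xi = \cR_s \otimes (\cQ_s^* \oplus W)$, expansion of $S_{\lambda^T}(\cQ_s^*\oplus W)$ by skew Schur functors, regrouping by $l(\mu)$ and $l(\lambda)$, and Bott's theorem for the $i=0$ case. The paper's own justification of the $i=0$ vanishing is just as terse (it simply observes the Bott consequence and cites \cite[Lemma 2.7]{SSam}), so your sketch there is at the same level of detail.
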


\begin{proof}

Since $\xi = \cR_s \otimes (\cQ_s^{*} \oplus W) $, by Cauchy's Formula in Section~\ref{subsec:Cauchy}, we have
\begin{align*}
\bigwedge^p \xi &= \bigoplus_{|\lambda| = p} S_{\lambda} \cR_s \otimes S_{\lambda^T} (\cQ_s^* \oplus W) \\
			&= \bigoplus_{\substack{|\lambda| = p \\ \mu \subseteq \lambda \subseteq s \times (d-s) \\ \mu \subseteq s \times (n-s) }} S_{\lambda} \cR_s \otimes S_{\mu^T} \cQ_s^* \otimes S_{\lambda^T / \mu^T}W.
\end{align*}

We get the lemma by combining the above equation with the fact that 
$$ \bF_i = \dsum_{p=0}^{s(d-s)} H^{p-i} (\mathbf{Gr} (s,L) ; \bigwedge^p \xi) \otimes_{K} A(-p). $$

In the case when $i=0$, we observe that, according to Borel-Weil-Bott Theorem, the vector bundle $S_{\lambda} \cR_s \otimes S_{\mu^T} \cQ_s^*$, where $ \mu \subseteq \lambda $, has cohomology in degree $|\lambda|$ if and only if $\mu = \lambda$. Furthermore, in this case, we have $ H^{|\lambda|} (\mathbf{Gr}(s,L); S_{\lambda} \cR_s \otimes S_{\mu^{T}} \cQ_s^*) =  H^{|\lambda|} (\mathbf{Gr}(s,L); S_{\mu} \cR_s \otimes S_{\mu^{T}} \cQ_s^*) = K $ for all partitions $\mu = \lambda$. This also follows from \cite[Lemma 2.7]{SSam}.
\end{proof}

\begin{lem} \label{lem:coh}
The following direct summand of $\mathbf{F}_i$ of the minimal free resolution $\mathbf{F_{\bullet}}$ of $\tilde{\mathcal{O}}_{s,d,n}$ is $0$ when $s > i$:
\begin{align*}
\bF_i^{(\RN{3})} = \bigoplus_{p = 0}^{s(d-s)} \bigoplus_{\substack{\mu \subseteq (s-1) \times (d-s) \\ \mu \subseteq \lambda \subseteq s \times (n-s) \\ |\lambda| = p \\ l(\lambda) = s }} H^{p-i}(\mathbf{Gr}(s,L); S_{\lambda}\mathcal{R}_s \otimes S_{\mu^T} \mathcal{Q}_s^*) \otimes S_{\lambda^T / \mu^T }W \otimes A(-p).
\end{align*}
Furthermore, the following direct summand of $\mathbf{F}_s$ of the minimal free resolution $\mathbf{F_{\bullet}}$ of $\tilde{\mathcal{O}}_{s,d,n}$ is:
\begin{align*}
\bF_s^{(\RN{3})} =& \bigoplus_{p = 0}^{s(d-s)} \bigoplus_{\substack{\mu \subseteq (s-1) \times (d-s) \\ \mu \subseteq \lambda \subseteq s \times (n-s) \\ |\lambda| = p \\ l(\lambda) = s }} H^{p-s}(\mathbf{Gr}(s,L)    ; S_{\lambda}\mathcal{R}_s \otimes S_{\mu^T} \mathcal{Q}_s^*) \otimes S_{\lambda^T / \mu^T }W \otimes A(-p) \\
		 =& \dsum_{\substack{\mu \subseteq (s-1) \times (d-s) \\ \lambda = (d-s+1, \mu_1 + 1, \mu_2 + 1, \ldots, \mu_{s-1} + 1 ) }} \bigwedge^d L \otimes S_{\lambda^T / \mu^T } W \otimes A(-|\lambda|).
\end{align*}

\end{lem}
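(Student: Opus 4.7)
The plan is to apply Bott's theorem (Section~\ref{subsec:Bott}) summand-by-summand to the bundle $S_\lambda\cR_s \otimes S_{\mu^T}\cQ_s^*$ appearing in $\bF_i^{(\RN{3})}$ and to compare its Bott cohomological degree with $p-s$. The Bott weight is $\nu = (\alpha,\beta)$ with $\alpha = (-\mu^T_{d-s},\ldots,-\mu^T_1)$ of length $d-s$ and $\beta = \lambda$ of length $s$; with $\rho = (d-1, d-2, \ldots, 0)$ I split $\nu+\rho$ into the first $d-s$ entries $A_j := (d-j) - \mu^T_{d-s+1-j}$ and the last $s$ entries $B_l := \lambda_l + (s-l)$. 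A direct check shows the $A_j$'s are $d-s$ distinct integers in $[1,d-1]$ (since $\mu^T_k \in [0,s-1]$) and the $B_l$'s are strictly decreasing positive integers (using $l(\lambda)=s$), so any coincidence in $\nu+\rho$ must involve one $A_j$ and one $B_l$. When no such coincidence occurs, Bott's theorem places the cohomology in degree $l(\omega) = \#\{(j,l) : A_j < B_l\}$ with value $S_\eta L$, where $\eta + \rho$ is the strictly decreasing rearrangement of $\nu+\rho$.

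The combinatorial heart is a sharp bound $l(\omega) \leq p-s$. Introduce the complementary set $C := [1,d-1] \setminus \{A_1,\ldots,A_{d-s}\}$, which has size $s-1$ and is identified with $\mu$ via the familiar bijection $C = \{\mu_i + s - i : 1 \leq i \leq s-1\}$ (padding $\mu$ with zeros to length $s-1$). Under the non-vanishing hypothesis each $B_l$ lies in $C \cup [d,\infty)$, and since $|C| = s-1 < s$, at least $t \geq 1$ of the $B_l$'s satisfy $B_l \geq d$; write the remaining $B_l$ (for $l > t$) as the $k_l$-th smallest element of $C$. One then reads off
\[
  l(\omega) \;=\; t(d-s) + \sum_{l > t}(B_l - k_l).
\]
Using the strictly decreasing structure $B_l \geq d + t - l$ for $l \leq t$ (which yields $\sum_{l \leq t} B_l \geq td + \binom{t}{2}$) together with the fact that $\{k_{t+1},\ldots,k_s\}$ is a size-$(s-t)$ subset of $[1,s-1]$, a direct arithmetic manipulation produces
\[
  (p-s) - l(\omega) \;\geq\; t(t-1) \;\geq\; 0.
\]
Equality throughout forces $t = 1$, $\lambda_1 = d - s + 1$, and $k_l = s - l + 1$ for $l \geq 2$, which via the bijection $C \leftrightarrow \mu$ translates to $\lambda_l = \mu_{l-1} + 1$ for $l = 2,\ldots,s$---exactly the $\lambda$ of the statement.

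Both conclusions follow. For $i < s$, every nonvanishing summand sits in degree $l(\omega) \leq p - s < p - i$, so $\bF_i^{(\RN{3})} = 0$. For $i = s$, only the special $\lambda = (d-s+1, \mu_1+1, \ldots, \mu_{s-1}+1)$ (one per $\mu \subseteq (s-1) \times (d-s)$) contributes; for that $\lambda$ one verifies $\{A_j\} \cup \{B_l\} = [1,d]$, so $\eta + \rho = (d, d-1, \ldots, 1)$, $\eta = (1^d)$, and the cohomology equals $\bigwedge^d L$. The main obstacle is the combinatorial bound in the middle paragraph: introducing the parameter $t$, decomposing $l(\omega)$ correctly into the ``top'' contribution $t(d-s)$ and the ``inside-$C$'' contribution $\sum_{l>t}(B_l - k_l)$, and extracting the slack $t(t-1)$ cleanly; the equality analysis and the final identification $\eta = (1^d)$ then fall out immediately.
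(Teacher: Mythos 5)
Your proof is correct, and it takes a genuinely different route from the paper. The paper proves Lemma~\ref{lem:coh} by induction on $s$: the base case $s=1$ is handled by direct inspection, and the inductive step splits the Bott weight $\nu$ into three auxiliary sequences $\nu(1),\nu(2),\nu(3)$, decomposes the sorting permutation as $l(\omega)=l(\omega_1)+l(\omega_2)+\mu_1$, and bounds $l(\omega_2)\le l(\omega_3)$ using the inductive hypothesis on $\mathbf{Gr}(s,L)$ sitting inside $\mathbf{Gr}(s+1,L)$. You instead run a closed-form, non-inductive computation: passing to the beta-numbers $A_j,B_l$ of $\nu+\rho$, identifying the complement $C$ of $\{A_j\}$ in $[1,d-1]$ with $\mu$ via the staircase bijection, and reading off $l(\omega)=t(d-s)+\sum_{l>t}(B_l-k_l)$, which you then compare to $p-s$ to isolate the slack $t(t-1)$. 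I checked the arithmetic: with $p=\sum B_l-\binom{s}{2}$ one indeed gets $(p-s)-l(\omega)\ge t(t-1)$ after using $\sum_{l\le t}B_l\ge td+\binom{t}{2}$ and $\sum_{l>t}k_l\ge\binom{s-t+1}{2}$, and equality forces $t=1$, $\lambda_1=d-s+1$, and $\lambda_l=\mu_{l-1}+1$, matching the paper. The payoff of your approach is a uniform argument with an explicit equality analysis and the final identification $\eta=(1^d)\Rightarrow S_\eta L=\bigwedge^d L$ dropping out for free; the cost is the combinatorial bookkeeping around $C$ and the $k_l$'s. The paper's induction avoids that bookkeeping but requires one to believe the permutation-splitting step, which is stated tersely. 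Both are valid; yours is arguably easier to verify line by line.
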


\begin{proof}

We proceed by induction on $s$.

When $s=1$, we have $\mu = (0)$, $\mu^T = (0^{d-1})$, and $\lambda = (p)$. According to Borel-Weil-Bott Theorem in Section~\ref{subsec:Bott}, we can see that the cohomology of $ S_{\lambda} \cR_1 \otimes S_{\mu^T} \cQ_1^*$ exists in degree strictly less than $p$. The cohomology concentrate in degree $p-1$ exactly when $ p = d $. 

Suppose we have such a pair $(\mu,\lambda)$ where $\mu \subseteq s \times (d-s-1)$  and $\mu \subseteq \lambda \subseteq (s+1) \times (n-s-1)$, with the conditions that $l(\lambda) = s+1$ . This is equivalent to $ \lambda_{s+1} \geq 1 $. Suppose further that $l(\mu) \leq s$. This is equivalent to $\mu_1^T \leq s$. Also set $m = \mu_1$, look at the weight vector $\nu = (0^{d-s-1-m},-\mu_m^T,\ldots,-\mu_1^T,\lambda_1,\ldots,\lambda_{s+1}) $ associated to $ S_{\lambda} \cR_{s+1} \otimes S_{\mu^T} \cQ_{s+1}^* $ on $\Gr (s+1,L)$. 

Suppose $\omega$ is the permutation that makes $\omega \bullet \nu$ weakly decreasing. Define $ \nu (1) = (0^{d-s-m-1}, \lambda_1 - \mu_1)$ and $ \nu (2) = (-\mu_m^T + 1, \ldots, -\mu_1^T + 1, \lambda_2, \ldots, \lambda_{s+1}) $ . If $ \omega_i $ is the permutation that makes $ \omega_i \bullet \nu (i) $ weakly decreasing for all $ i = 1,2 $ , then we know $ l(\omega) = l(\omega_1) + l(\omega_2) + \mu_1 $ by first applying the permutation $(d-s-m,d-s-m+1) (d-s-m+1,d-s-m+2) \ldots (d-s-1,d-s)$ to $\nu$. Furthermore, define $ \nu(3) = (0^{d-s-m},-\mu_m^T + 1, \ldots, -\mu_1^T + 1, \lambda_2, \ldots, \lambda_{s+1}) $ and $ \omega_3 $ to be the permutation that makes $ \omega_3 \bullet \nu(3)$ a weakly decreasing sequence. Note we have the weight vector $\nu(3)$ is associated to the bundle $ S_{\lambda'} \cR_s \otimes S_{\mu'^T} \cQ_s^* $ on $\Gr (s,L)$ where $\lambda' = (\lambda_2, \ldots, \lambda_{s+1})$ and $\mu' = (\mu_2, \ldots, \mu_s)$. So $ \mu' \subseteq (s-1) \times (d-s-1) \subseteq (s-1) \times (d-s) $ and $\mu' \subseteq \lambda' \subseteq s \times (n-s-1) \subseteq s \times (n-s) $. Furthermore $l(\lambda') = s$. Hence by induction hypothesis, we have $l(\omega_3 ) \leq |\lambda'| - s = |\lambda| - \lambda_1 - s$. Therefore, $ l(\omega_1) \leq \lambda_1 - \mu_1 -1 $ and $ l(\omega_2) \leq l(\omega_3 ) \leq |\lambda| - \lambda_1 - s $. Combining them, we get $ l(\omega) \leq |\lambda| - (s + 1) $. This completes the induction and implies the cohomology vanishing statement using Borel-Weil-Bott Theorem. 

Moreover, we get that the equality holds exactly when $ \lambda = (d-s, \mu_1 + 1, \mu_2 + 1, \ldots, \mu_s + 1 ) $ in the case $l(\lambda) = s+1$. This gives us the last statement.
\end{proof}

Let $C_s$ be the submodule of $\tilde{\mathcal{O}}_{s,d,n}$ generated by $\bigoplus_{\substack{\mu \subseteq (s-1) \times (d-s)}} A(-|\mu|)$. Also define $C_{d+1} = 0$. Note that $C_1 = \mathcal{O}_{1,d,n}$ and $C_d = \tilde{\mathcal{O}}_{d,d,n}$. We have the following proposition.

\begin{prop} \label{prop:ses}
For $s = 1, \ldots, d$, there are short exact sequences
\begin{equation*}
0 \longrightarrow C_s \longrightarrow \tilde{\mathcal{O}}_{s,d,n} \longrightarrow C_{s+1}(-s) \longrightarrow 0.
\end{equation*}
Furthermore, the term $\mathbf{F}_i^s$ of the minimal free resolution $\mathbf{F}^s_{\bullet}$ of $ C_s $ for $i \leq s-1$ is
\begin{equation*}
\mathbf{F}_i^s =  \bigoplus_{\substack{\mu \subseteq (s-1) \times (d-s) \\ \mu \subseteq \lambda \subseteq (s-1) \times (n-s) }} H^{|\lambda|-i}(\mathbf{Gr}(s,L); S_{\lambda}\mathcal{R}_s \otimes S_{\mu^T} \mathcal{Q}_s^*) \otimes S_{\lambda^T / \mu^T }W \otimes A(-|\lambda|).
\end{equation*}
Moreover, the $s$-th term of $\mathbf{F}^s_{\bullet}$ is given by
\begin{align*}
\mathbf{F}_s^s  = & \bigoplus_{\substack{\mu \subseteq (s-1) \times (d-s) \\ \mu \subseteq \lambda \subseteq (s-1) \times (n-s) }} H^{|\lambda|-s}(\mathbf{Gr}(s,L); S_{\lambda}\mathcal{R}_s \otimes S_{\mu^T} \mathcal{Q}_s^*) \otimes S_{\lambda^T / \mu^T }W \otimes A(-|\lambda|) \\
	   \oplus & \dsum_{k = s}^d \dsum_{\substack{\mu \subseteq (k-1) \times (d-k) \\ \lambda = (d-k+1, \mu_1 + 1, \dots, \mu_{k-1} + 1)}} \bigwedge^d L \otimes S_{\lambda^T / \mu^T} W \otimes A ( -|\lambda| - \frac{(s+k-1)(k-s)}{2} ).
\end{align*}
\end{prop}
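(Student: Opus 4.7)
The plan is to argue by downward induction on $s$, proving the short exact sequence and both resolution statements simultaneously. The base case $s = d$ is nearly trivial: since $\mathbf{Gr}(d, L)$ is a point and the only $\mu \subseteq (d-1) \times 0$ is empty, $C_d = \tilde{\mathcal{O}}_{d,d,n}$ is cyclic, $C_{d+1} = 0$, and the Koszul complex on $\xi = L \otimes W$ supplies the resolution, matching the stated formula after Cauchy.

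\textbf{Key identification.} The technical heart of the inductive step is a $GL(L) \times GL(W)$-equivariant complex isomorphism $\mathbf{F}_\bullet^{(\mathrm{I})} \cong \mathbf{F}^{s+1}_\bullet(-s)$ (in the range where $\mathbf{F}^{s+1}_\bullet$ has the simple form, namely homological degree $\leq s$), induced by the substitution $\mu = (1^s) + \tilde\mu$, $\lambda = (1^s) + \tilde\lambda$. Concretely, the weight vector on $\mathbf{Gr}(s, L)$ differs from that on $\mathbf{Gr}(s+1, L)$ only by insertion of a $-s$ among the $\mathcal{Q}$-entries; after adding $\rho$, this becomes an extra $0$ at position $d-s$ rather than at position $d$, and moving it across the $s$ nonzero tail entries costs exactly $s$ transpositions, so $l(\omega_s) = l(\omega_{s+1}) + s$. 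Since $|\lambda| = |\tilde\lambda| + s$, the cohomological degree $|\lambda| - l(\omega)$ is preserved, both sides produce the same $S_\eta L$, and the $A$-degree shifts by $-s$.

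\textbf{SES and low-degree resolution.} Define $\phi\colon \tilde{\mathcal{O}}_{s,d,n} \to C_{s+1}(-s)$ to be zero on $\mathbf{F}_0^{(\mathrm{II})}$ and the above identification on $\mathbf{F}_0^{(\mathrm{I})}$. The complex-level isomorphism ensures $\phi$ extends to a well-defined surjective $A$-module map with $C_s \subseteq \ker\phi$. Applying $\text{Tor}^A_\bullet(-, K)$ to the SES $0 \to \ker\phi \to \tilde{\mathcal{O}}_{s,d,n} \to C_{s+1}(-s) \to 0$: for $i \leq s-1$, Lemma 3.2 gives $\mathbf{F}_i^{(\mathrm{III})} = 0$, so $\text{Tor}_i(\tilde{\mathcal{O}}_{s,d,n}) = \mathbf{F}_i^{(\mathrm{I})} \oplus \mathbf{F}_i^{(\mathrm{II})}$, and by induction $\text{Tor}_i(C_{s+1}(-s)) = \mathbf{F}^{s+1}_i(-s) \cong \mathbf{F}_i^{(\mathrm{I})}$. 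The induced long exact sequence forces $\text{Tor}_i(\ker\phi) \cong \mathbf{F}_i^{(\mathrm{II})}$ for $i \leq s-1$; a character comparison at $i = 0$ gives $\ker\phi = C_s$, simultaneously yielding the SES and the resolution formula in this range.

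\textbf{Degree $s$ and the main obstacle.} At $i = s$, Lemma 3.2 supplies $\mathbf{F}_s^{(\mathrm{III})}$ as the $k = s$ summand. The $k > s$ summands arise by backward propagation of the $\wedge^d L$-isotypic extras in $\mathbf{F}^k_k$ through the connecting homomorphisms of the LESs for the chain of SESs at indices $s+1, \ldots, k$; the accumulated $A$-shift is $-\sum_{j=s}^{k-1} j = -(k-s)(s+k-1)/2$, matching the stated formula. \emph{The main obstacle} is twofold: first, upgrading $\mathbf{F}_\bullet^{(\mathrm{I})} \cong \mathbf{F}^{s+1}_\bullet(-s)$ from a graded-module isomorphism to an isomorphism of complexes (which Schur's lemma plus characteristic $0$ should enable); and second, controlling the connecting homomorphisms at each propagation step so the extras survive. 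For the latter, the $\wedge^d L$ factor is the essential obstruction, since $\text{Tor}_{i-1}(C_s) = \mathbf{F}^s_{i-1}$ is built from Schur functors of $L$ of shape contained in $(s-1) \times (n-s)$ and therefore admits no $\wedge^d L$-isotypic component, forcing the relevant connecting maps to vanish by equivariance.
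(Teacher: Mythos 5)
Your overall scaffolding matches the paper's: a downward induction on $s$, a map $\tilde{\cO}_{s,d,n}\to C_{s+1}(-s)$ whose kernel is $C_s$, the resulting long exact sequence on $\operatorname{Tor}$, and a reading-off of the $\rF_i^s$ from Lemma~\ref{lem:coh} in low homological degree plus propagation of the $\bigwedge^dL$-isotypic extras for the $s$-th term. The heuristic for why the extras survive the connecting maps (no $\bigwedge^dL$-isotypic component in $\operatorname{Tor}_{<s}(C_s)$ by equivariance) is also the right intuition.

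The gap is the construction of $\phi$ itself, and it is precisely the part you dismiss as an ``obstacle'' to be resolved by ``Schur's lemma plus characteristic~$0$''. That is not enough. What you have from Bott's algorithm is a $GL(L)\times GL(W)$-equivariant isomorphism of graded free $A$-modules $\rF_i^{(\RN{1})}\cong\rF_i^{s+1}(-s)$ term by term, but (i) the irreducibles appearing can have multiplicity $>1$ (distinct pairs $(\lambda,\mu)$ may produce isomorphic $S_\eta L\otimes S_{\lambda^T/\mu^T}W$ in the same homological and internal degree), so Schur's lemma does not pin down the differential, and (ii) even if it did, you would still need the projection of the full resolution $\bF_\bullet$ of $\tilde{\cO}_{s,d,n}$ onto the type-$\RN{1}$ summand to be a \emph{chain} map, i.e.\ $\bF_\bullet^{(\RN{2})}\oplus\bF_\bullet^{(\RN{3})}$ must be a subcomplex. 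Neither (i) nor (ii) is automatic from equivariance. In the paper this is exactly the content of Lemma~\ref{lem:maps}: the map is built geometrically by pulling the Koszul complex on $\Gr(s,L)$ back to $\mathbf{Flag}(s,s+1,L)$, passing to the intermediate complex $\bR_\bullet$, taking $(\pi_1)_*$ (a surjection by the projection formula and vanishing of $R^1$ for the $\mathbb{P}^{d-s-1}$-bundle), and then taking $\rR^s(\pi_2)_*$ and using Serre duality to embed the result into the Koszul complex on $\Gr(s+1,L)$ shifted by $s$. That construction is what realizes your prescription ``zero on $\rF_0^{(\RN{2})}$, identification on $\rF_0^{(\RN{1})}$'' as an actual morphism of $A$-modules; without it, the map $\phi$ is only defined on a generating set and you have not checked that it kills the relations.
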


Before the actual proof, we will need the following lemma and notation.

\paragraph{Notation} \label{nottilde}
Let $ C = \{ (\mu,\lambda) \mid \mu \subseteq s \times (d-s), \mu \subseteq \lambda \subseteq s \times (n-s), l(\mu) = s \}$. And set $p = |\lambda|$. Assume $\lambda = (\lambda_1,\lambda_2,\ldots,\lambda_s)$ and $\mu = (\mu_1,\mu_2,\ldots,\mu_s)$. We define $\tilde{\lambda} = (\lambda_1 - 1, \lambda_2 - 1, \ldots, \lambda_s - 1, 0)$ and $\tilde{\mu} = (\mu_1 - 1,\ldots,\mu_s - 1, 0)$. We can see that $\tilde{\mu} \subseteq (s+1) \times (d-s-1)$ and $\tilde{\mu} \subseteq \tilde{\lambda} \subseteq (s+1) \times (n-s-1)$. Also $|\tilde{\lambda}| = p - s$ and $\lambda^T / \mu^T = \tilde{\lambda}^T / \tilde{\mu}^T $.

\begin{lem} \label{lem:maps}
There exists a map between $\tilde{\cO}_{s,d,n}$ and $\tilde{\cO}_{s+1,d,n}(-s)$ such that the map identifies 
 $$\displaystyle  H^{j+s}(\mathbf{Gr}(s,L); S_{\lambda}\mathcal{R}_s \otimes S_{\mu^T} \mathcal{Q}_s^*) \otimes S_{\lambda^T / \mu^T }W \otimes A(-|\lambda|)$$ 
 and 
 $$\displaystyle  H^j(\mathbf{Gr}(s+1,L); S_{\tilde{\lambda}} \cR_{s+1} \otimes S_{\tilde{\mu}^T} \cQ_{s+1}^{*}) \otimes S_{\tilde{\lambda}^T / \tilde{\mu}^T} W \otimes A(-|\tilde{\lambda}| - s )$$ 
 for each $j$ and pair $(\lambda, \mu) \in C$ in the minimal free resolution $\bF_{\bullet}$ of $\tilde{\cO}_{s,d,n}$ and $\tilde{\cO}_{s+1,d,n}(-s)$ respectively. And it maps everything else to 0.
 In other words, we are identifying $\bF_i^{(\RN{1})}$ in $\tilde{\cO}_{s,d,n}$ and $\bF_i^{(\RN{2})}(-s)$ in $\tilde{\cO}_{s+1,d,n}(-s)$ for each $i$.
\end{lem}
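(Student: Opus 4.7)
The plan is to construct the map $\tilde{\cO}_{s,d,n} \to \tilde{\cO}_{s+1,d,n}(-s)$ via a geometric correspondence on the two-step flag variety, then verify its summand-by-summand action on the minimal free resolutions via Bott's theorem.

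For the construction, I would introduce the incidence total space
\[Z_{s,s+1} = \{(\varphi, U, U') \in \mathrm{End}(V) \times \mathrm{Flag}((s,1), L) \mid \varphi(U) \subset U,\ \varphi(U') \subset U'\},\]
which is the total space of a subbundle of the trivial $\mathrm{End}(V)$-bundle on $\mathrm{Flag}((s,1),L)$ whose conormal bundle is
\[\xi_{s,s+1} = \cR_{s+1} \otimes (\cQ_{s+1}^* \oplus W) \oplus \cR_s \otimes \cL^*,\]
where $\cL = \cR_{s+1}/\cR_s$. Applying Theorem~\ref{thm:kempf} to the three total spaces $Z_s$, $Z_{s+1}$, and $Z_{s,s+1}$ and composing the pushforwards along the two forgetful projections $\pi^Z_s : Z_{s,s+1} \to Z_s$ and $\pi^Z_{s+1} : Z_{s,s+1} \to Z_{s+1}$ with a twist by $\det \cR_s$ (which accounts for the required grading shift of $-s$), I obtain a canonical morphism of $A$-modules.

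To verify the claim on the minimal free resolutions, I would compute the action on each Bott summand. For a pair $(\mu, \lambda) \in C$, writing $\mu = (1^s) + \mu'$ and $\lambda = (1^s) + \lambda'$, a direct computation shows that the weight vector on $\Gr(s,L)$ associated to $S_\lambda \cR_s \otimes S_{\mu^T} \cQ_s^*$ and the weight vector on $\Gr(s+1, L)$ associated to $S_{\tilde\lambda} \cR_{s+1} \otimes S_{\tilde\mu^T} \cQ_{s+1}^*$ differ by a cyclic permutation of length $s$ on positions $d-s$ through $d$. Consequently Bott's theorem produces the same dominant sequence $\eta$ in both cases, yielding the same $GL(L)$-representation $S_\eta L$ with cohomological degrees differing by exactly $s$. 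Combined with the identities $\tilde\lambda^T/\tilde\mu^T = \lambda^T/\mu^T$ and $|\tilde\lambda| + s = |\lambda|$ (so that the $W$-factor and the grading shift match), this identifies $\bF_i^{(\RN{1})}$ of $\tilde{\cO}_{s,d,n}$ with $\bF_i^{(\RN{2})}(-s)$ of $\tilde{\cO}_{s+1,d,n}(-s)$ as $GL(L) \times GL(W)$-modules; a multiplicity-one check (each matched isotypic component occurs exactly once on each side) upgrades this to an isomorphism on each summand.

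Finally, for pairs not in $C$, i.e.\ the Type II and Type III summands of the source, the corresponding Bott weights on $\Gr(s, L)$ do not admit the cyclic-shift reduction above, so the map vanishes on them. The main obstacle will be the geometric construction itself: ensuring that the twist by $\det \cR_s$ emerges naturally from the pushforward (rather than being inserted by hand) and that the resulting morphism assembles into a genuine chain map of free resolutions compatible with differentials. Once that is settled, the summand-by-summand matching via Bott's theorem is essentially combinatorial bookkeeping.
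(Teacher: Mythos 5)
Your Bott-theoretic bookkeeping is essentially right: for $(\mu,\lambda) \in C$ the weights on $\Gr(s,L)$ and $\Gr(s+1,L)$ do differ by a length-$s$ cycle on the last $s+1$ positions (the zero contributed by $\mu_1^T = s$ in position $d-s$ migrating to the trailing zero of $\tilde\lambda$), and the identities $\tilde\lambda^T/\tilde\mu^T = \lambda^T/\mu^T$, $|\tilde\lambda| = |\lambda| - s$ take care of the $W$-factor and grading shift. This part matches the paper. However, the geometric construction of the module map is where your proposal has a genuine gap, and you acknowledge it yourself. The correspondence $Z_s \xleftarrow{\pi^Z_s} Z_{s,s+1} \xrightarrow{\pi^Z_{s+1}} Z_{s+1}$ only gives natural maps $\tilde{\cO}_{s,d,n} \to (q_1)_*\cO_{Z_{s,s+1}}$ and $\tilde{\cO}_{s+1,d,n} \to (q_1)_*\cO_{Z_{s,s+1}}$ (units of adjunction along the two forgetful maps), and both point \emph{into} the middle object; there is no obvious way to compose them into the needed map $\tilde{\cO}_{s,d,n} \to \tilde{\cO}_{s+1,d,n}(-s)$, and ``twisting by $\det\cR_s$'' is not a construction but a bookkeeping device. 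Moreover, the Koszul complex $\bigwedge^{\bullet}\xi_{s,s+1}$ on the flag variety is \emph{not} the complex you need: when you decompose it, it contains far more summands than the Type-I pieces you want to transport.

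The paper resolves exactly this difficulty by introducing an intermediate complex $\bR_{\bullet}$ on $\mathbf{Flag}(s,s+1,L)$ that is \emph{not} the Koszul complex of any bundle. Its $i$-th term $\dsum S_{\lambda}\cR_s \otimes S_{\tilde\mu^T}\cQ_{s+1}^* \otimes \cL^{-s} \otimes S_{\lambda^T/\mu^T}W$ is extracted from $\pi_1^*\bP_{\bullet}$ by a $P$-module weight filtration (taking the top graded piece with respect to the weight on $e_{s+1}^*$), giving a \emph{quotient} $\pi_1^*\bP_{\bullet} \twoheadrightarrow \bR_{\bullet}$ which, after $(\pi_1)_*$, yields $\bP_{\bullet}\twoheadrightarrow(\pi_1)_*\bR_{\bullet}$. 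Because $\operatorname{char} K = 0$ each $\bP_i$ is a semisimple $\GL$-representation, so this surjection splits and $(\pi_1)_*\bR_i$ becomes a direct \emph{summand} of $\bP_i$. On the other side one applies $\rR^s(\pi_2)_*$ (computed via relative Serre duality, which is where the line bundle $\cL^{-s}$ and the homological shift by $s$ come from organically) to realize $\rR^s(\pi_2)_*\bR_{\bullet}$ as a \emph{subcomplex} of $\bQ_{\bullet}[-s]$. Composing the splitting of the quotient with the inclusion, and then running everything through Theorem~\ref{thm:kempf}, produces the map with the asserted summand behavior; the Type II and Type III pieces die precisely because $\bR_{\bullet}$ only sees the Type I summands. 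You would need to supply this (or an equivalent) construction; as written, your proof does not produce the map.
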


\begin{proof}[Proof of Proposition~\ref{prop:ses}] 
We will assume the above lemma and show the existence of that short exact sequence first and prove the lemma at the end. The proof of exactness is done by induction on $s$.

When $s = d - 1$, $C_d = \tilde{\mathcal{O}}_{d,d,n}$ is resolved by a Koszul complex. When $i \leq d-1$, the i-th term of its minimal resolution is given by:
\begin{equation*}
\begin{split}
\bF_i	& = \bigwedge^i ( L \otimes W ) \\
	& = \bigoplus_{\substack{|\lambda|=i \\ \lambda \subseteq d \times (n-d) }} S_{\lambda} L \otimes S_{\lambda^T} W \otimes A(-|\lambda|) \\
	& = \bigoplus_{\substack{|\lambda|=i \\ \lambda \subseteq (d-1) \times (n-d) }} S_{\lambda} L \otimes S_{\lambda^T} W \otimes A(-|\lambda|).
\end{split}
\end{equation*}

The $d$-th term of the Koszul complex is given by:
\begin{align*}
\bF_d & = \bigoplus_{\substack{|\lambda|=d \\  \lambda \subseteq d \times (n-d) }} S_{\lambda} L \otimes S_{\lambda^T} W \otimes A(-|\lambda|) \\
      & = (\bigoplus_{\substack{|\lambda|=d \\ \lambda \subseteq (d-1) \times (n-d)}} S_{\lambda} L \otimes S_{\lambda^T} W \otimes A(-|\lambda|)) \oplus (\bigwedge^d L \otimes S^d W \otimes A(-d)).
\end{align*}

The map $\tilde{\mathcal{O}}_{d-1,d,n} \surjects C_{d}(-(d-1))$ will induce a surjection between $\bF_i$ and $\bF_i^d(-(d-1))$ for $i \leq d - 1$. By the long exact sequence on Tor and Lemma~\ref{lem:coh}, we see that
\begin{equation*}
\mathbf{F}_i^{d-1} =  \bigoplus_{\substack{\mu \subseteq (d-2) \times 1 \\ \mu \subseteq \lambda \subseteq (d-2) \times (n-d+1) }} H^{|\lambda|-i}(\mathbf{Gr}(d-1,L); S_{\lambda}\mathcal{R}_{d-1} \otimes S_{\mu^T} \mathcal{Q}_{d-1}^*) \otimes S_{\lambda^T / \mu^T }W \otimes A(-|\lambda|)
\end{equation*}
for $i \leq d - 2$. And when $i = d-1$
\begin{equation*}
\begin{split}
\bF_{d-1}^{d-1} & =  \bigoplus_{\substack{\mu \subseteq (d-2) \times 1 \\ \mu \subseteq \lambda \subseteq (d-2) \times (n-d+1) }} H^{|\lambda|-d+1}(\mathbf{Gr}(d-1,L); S_{\lambda}\mathcal{R}_{d-1} \otimes S_{\mu^T} \mathcal{Q}_{d-1}^*) \otimes S_{\lambda^T / \mu^T }W \otimes A(-|\lambda|) \\
		& \oplus \dsum_{\substack{\mu \subseteq (d-2) \times 1 \\ \mu \subseteq \lambda \subseteq (d-1) \times (n-d+1) \\ l(\lambda) = d-1 }} H^{|\lambda|-d+1}(\Gr (d-1,L); S_{\lambda} \cR_{d-1} \otimes S_{\mu^T} \cQ_{d-1}^*) \otimes S_{\lambda^T / \mu^T} W \otimes A(-|\lambda|) \\
		& \oplus \bigwedge^d L \otimes S^d W \otimes A(-d-(d-1)) \\
	        & =  \bigoplus_{\substack{\mu \subseteq (d-2) \times 1 \\ \mu \subseteq \lambda \subseteq (d-2) \times (n-d+1) }} H^{|\lambda|-d+1}(\mathbf{Gr}(d-1,L); S_{\lambda}\mathcal{R}_{d-1} \otimes S_{\mu^T} \mathcal{Q}_{d-1}^*) \otimes S_{\lambda^T / \mu^T }W \otimes A(-|\lambda|) \\
		& \oplus \dsum_{k = d-1}^d \dsum_{\substack{\mu \subseteq (k-1) \times (d-k) \\ \lambda = (d-k+1,\mu_1+1,\ldots,\mu_{k-1}+1)}} \bigwedge^d L \otimes S_{\lambda^T / \mu^T }W \otimes A(-|\lambda| - \frac{(d+k-2)(k-d+1)}{2}). 
\end{split}
\end{equation*}
Similarly, by induction, we see the proposition is true. 
\end{proof}

Now we will prove the previous lemma and construct the map between $\tilde{\cO}_{s,d,n}$ and $\tilde{\cO}_{s+1,d,n}(-s)$ as follows. 

\begin{proof}[Proof of Lemma~\ref{lem:maps}]
Look at the following commutative diagram:

\[\xymatrix{
 & \mathbf{Flag}(s,s+1,L) \ar[dl]_{\pi_1} \ar[dr]^{\pi_2} & \\
  \mathbf{Gr}(s,L) \ar[dr]_{p_1} & & \mathbf{Gr}(s+1,L) \ar[dl]^{p_2} \\
  & \mathbf{Spec}(K)  & \\
} \]

Let $\cL = \cR_{s+1} / \cR_s$ be a line bundle on $\mathbf{Flag}(s,s+1,L)$. And let $\bP_{\bullet}$ be the Koszul complex on $\mathbf{Gr}(s,L)$ where $\bP_i = \bigwedge^{i} (\cR_s \otimes (\cQ_s^{*} \oplus W))$. Let $\bQ_{\bullet}$ be a similar Koszul complex on $\mathbf{Gr}(s+1,L)$ where $\bQ_i = \bigwedge^i (\cR_{s+1} \otimes ( \cQ_{s+1}^{*} \oplus W ))$.

Let us look at the quotient of $(\pi_1)^* (\bP_{\bullet})$ which we denote $\mathbf{R'}_{\bullet} = \pi_1^*(\mathbf{P'}_{\bullet})$ where
\begin{align*}
\mathbf{P'}_i = \dsum_{\substack{\mu \subseteq \lambda \subseteq s \times (n-s) \\ \mu \subseteq s \times (d-s) \\ l(\mu) = s \\ |\lambda| = i}} S_{\lambda} \cR_s \otimes S_{\mu^T} \cQ_s^* \otimes S_{ \lambda^T / \mu^T} W.
\end{align*}
and
\begin{align*}
\mathbf{R'}_i = \dsum_{\substack{\mu \subseteq \lambda \subseteq s \times (n-s) \\ \mu \subseteq s \times (d-s) \\ l(\mu) = s \\ |\lambda| = i}} S_{\lambda} \cR_s \otimes S_{\mu^T} \cQ_s^* \otimes S_{\lambda^T / \mu^T} W.
\end{align*}

Let $\bR_{\bullet}$ be another complex on $\mathbf{Flag}(s,s+1,L)$ whose $i$th term is defined by
\begin{align*}
\bR_i = \dsum_{\substack{\mu \subseteq \lambda \subseteq s \times (n-s) \\ \mu \subseteq s \times (d-s) \\ l(\mu) = s \\ |\lambda| = i}} S_{\lambda} \cR_s \otimes S_{\tilde{\mu}^T} \cQ_{s+1}^* \otimes \cL^{-s} \otimes S_{\lambda^T / \mu^T} W.
\end{align*}
Here $\tilde{\mu}$ is defined in paragraph \ref{nottilde}.

We can see that this complex is a quotient of $\mathbf{R'}_{\bullet} = \pi_1^*(\mathbf{P'}_{\bullet})$ in the following ways. Let us recall that there is well-known equivalence of categories
\begin{align*}
 [\textrm{homogeneous vector bundles on } GL(L) / P] \xrightarrow{h} [P-\textrm{modules}]
\end{align*}
where $h(V)$ is the fiber at identity and $P$ is the stabilizer of the flag $\langle e_1,\dots,e_s \rangle \subset \langle e_1,\dots,e_{s+1}\rangle$. Under this equivalence we can look at the $P$-module associated to $S_{\mu^T} \cQ_s^*$ on $\mathbf{Flag}(s,s+1,L)$. Let us call it $S_{\mu^T} Q_s^*$. 
Let $e_{s+1}^*, \ldots, e_n^*$ be the weights of the action of $P$ on $Q_s^*$. Then $P$ acting on $S_{\mu^T} Q_s^*$ admits a filtration by weights. Set $Q_i$ be the span of weight vectors in $S_{\mu^T} Q_s^*$ whose weight on the basis $e_{s+1}^*$ is at most $i$. Since $P$ never takes $e_i^*$ to $e_{s+1}^*$ if $i > s+1$, $P$ cannot increase the weight on $e_{s+1}^*$. $Q_i$ is clearly a submodule of $S_{\mu^T}  Q_s^*$. Now take $i = \mu_1^T - 1$ and look at the quotient module $S_{\mu^T} Q_s^* / Q_i$. This is exactly the $P$-module associated to the vector bundle $S_{\tilde{\mu}^T} \cQ_{s+1}^* \otimes \cL^{-s}$ since $\mu_1^T = s$ in our case.

We will show later in Lemma~\ref{lem:maps} that $ \bP_{\bullet} \surjects (\pi_1)_* \bR_{\bullet} $ and $ \rR^s (\pi_2)_* \bR_{\bullet} \injects \bQ_{\bullet}[-s]$. 

Assuming the above, we construct the map first. Now $ \bP_{\bullet} \surjects (\pi_1)_* \bR_{\bullet} $. Since over characteristic $0$ field, the exterior algebra of $\cR_s \otimes (\cQ_s^* \otimes W))$ is semisimple, this surjection splits for each term and makes $(\pi_1)_* \bR_i$ a direct summand of $ \bP_i $ for each $i$. Furthermore, we also have the minimal free resolution of $\tilde{\cO}_{s,d,n}$ is a minimal free graded representative of $\rR (p_1)_* \bP_{\bullet}$. Thanks to the splitting, this will surject onto a minimal free graded representative of $\rR (p_1)_* ((\pi_1)_* \bR_{\bullet}) = \rR (p_1 \circ \pi_1)_* \bR_{\bullet} = \rR (p_2 \circ \pi_2)_* \bR_{\bullet} = \rR (p_2)_* (\rR^s (\pi_2)_* \cR_{\bullet})$. Similarly, the same representative will be a subcomplex of the minimal free graded representative of $\rR (p_2)_* \bQ_{\bullet} [-s]$ which is the minimal free resolution of $\tilde{\cO}_{s+1,d,n}(-s)$. This subcomplex is induced by the map $ \rR^s (\pi_2)_* \bR_{\bullet} \injects \bQ_{\bullet}[-s]$. 

We will compute the terms of the minimal free graded representative of $\rR (p_1 \circ \pi_1)_* \bR_{\bullet}$ using geometric approach to syzygies. According to the relative version of Borel-Weil-Bott Theorem, we know that all higher direct images of $\bR_\bullet$ under $\pi_1$ vanish and $\rR (p_1 \circ \pi_1)_* \bR_{\bullet} = \rR (p_1)_* ((\pi_1)_* \bR_{\bullet}) $. On the one hand,
\begin{equation*}
(\pi_1)_* \bR_i = \dsum_{\substack{\mu \subseteq \lambda \subseteq s \times (n-s) \\ \mu \subseteq s \times (d-s) \\ l(\mu) = s \\ |\lambda| = i}} S_{\lambda} \cR_s \otimes S_{\mu^T} \cQ_s^* \otimes S_{\lambda^T / \mu^T} W.
\end{equation*}
So the $i$th term is
\begin{equation*}
\bF_i = \dsum_{\substack{\mu \subseteq \lambda \subseteq s \times (n-s) \\ \mu \subseteq s \times (d-s) \\ l(\mu) = s }} H^{|\lambda|-i} (\mathbf{Gr}(s,L); S_{\lambda} \cR_s \otimes S_{\mu^T} \cQ_s^*) \otimes S_{\lambda^T / \mu^T} W \otimes A(-|\lambda|).
\end{equation*}
On the other hand,
\begin{equation*}
\rR^s (\pi_2)_* \bR_i = \dsum_{\substack{\tilde{\mu} \subseteq \tilde{\lambda} \subseteq s \times (n-s-1) \\ \tilde{\mu} \subseteq s \times (d-s-1) \\ |\lambda| = i-s }} S_{\tilde{\lambda}} \cR_{s+1} \otimes S_{\tilde{\mu}^T} \cQ_{s+1}^* \otimes S_{\tilde{\lambda}^T / \tilde{\mu}^T} W.
\end{equation*}
So the $i$th term can also be written as
\begin{equation*}
\bF_i = \dsum_{\substack{\tilde{\mu} \subseteq \tilde{\lambda} \subseteq s \times (n-s-1) \\ \tilde{\mu} \subseteq s \times (d-s-1) }} H^{|\tilde{\lambda}| - i} (\mathbf{Gr}(s+1,L); S_{\tilde{\lambda}} \cR_{s+1} \otimes S_{\tilde{\mu}^T} \cQ_{s+1}^*) \otimes S_{\tilde{\lambda}^T / \tilde{\mu}^T} W \otimes A(-|\tilde{\lambda}|-s).
\end{equation*}
We can identify them canonically. Therefore, composing the surjection with the injection gives us the desired map between $\tilde{\cO}_{s,d,n}$ and $\tilde{\cO}_{s+1,d,n}(-s)$.
\end{proof}

Now we will show the remaining part. 

\begin{lem} \label{lem:maps}
We have the following maps: $ \bP_{\bullet} \surjects (\pi_1)_* \bR_{\bullet} $ and $ \rR^s (\pi_2)_* \bR_{\bullet} \injects \bQ_{\bullet}[-s]$.
\end{lem}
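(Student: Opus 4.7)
The plan is to obtain both maps as $GL(L)\times GL(W)$-equivariant maps of complexes of homogeneous bundles, by computing $(\pi_1)_*\bR_\bullet$ and $\rR^s(\pi_2)_*\bR_\bullet$ term-by-term via the relative Borel--Weil--Bott theorem applied fiber-wise, and then using characteristic zero semisimplicity to exhibit each image as a direct summand of $\bP_\bullet$, respectively $\bQ_\bullet[-s]$.

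For the first map, $\pi_1$ has fiber $\mathbf{Gr}(1,L/R_s)\cong\mathbb{P}^{d-s-1}$ on which $\cL$ restricts to the tautological line subbundle and $\cQ_{s+1}$ to the tautological rank-$(d-s-1)$ quotient. The fiber-wise bundle $S_{\tilde\mu^T}\cQ_{s+1}^*\otimes\cL^{-s}$ carries $(\alpha,\beta)$-weight
\[
\nu = (-\tilde\mu^T_{d-s-1},\,-\tilde\mu^T_{d-s-2},\,\ldots,\,-\tilde\mu^T_1,\,-s).
\]
Using the identity $\tilde\mu^T_i = \mu^T_{i+1}$ together with the condition $l(\mu)=s$, equivalently $\mu^T_1=s$, this rewrites as $\nu = (-\mu^T_{d-s},\ldots,-\mu^T_1)$, which is already weakly decreasing. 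Bott's theorem gives $\omega=\mathrm{id}$, concentrates cohomology in degree zero, and produces $S_{\mu^T}\cQ_s^*$ on the base. Combining with the projection formula for the factors $S_\lambda\cR_s$ and $S_{\lambda^T/\mu^T}W$ pulled back from $\mathbf{Gr}(s,L)$ yields
\[
(\pi_1)_*\bR_i = \bigoplus_{\substack{(\mu,\lambda)\in C \\ |\lambda|=i}} S_\lambda\cR_s\otimes S_{\mu^T}\cQ_s^*\otimes S_{\lambda^T/\mu^T}W = \bP'_i,
\]
and Cauchy's formula realises $\bP'_\bullet$ as the direct summand of $\bP_\bullet$ comprising those Cauchy factors with $l(\mu)=s$. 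Because the Koszul differentials on $\bP_\bullet$ are $GL(L)\times GL(W)$-equivariant and each irreducible summand appears with multiplicity one, this decomposition respects the differentials, so the projection $\bP_\bullet\surjects\bP'_\bullet = (\pi_1)_*\bR_\bullet$ is the desired surjection of complexes.

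For the second map, $\pi_2$ has fiber $\mathbf{Gr}(s,R_{s+1})\cong\mathbb{P}^s$ on which $\cR_s$ restricts to the tautological rank-$s$ subbundle and $\cL$ to the tautological quotient line bundle. The fiber-wise bundle $S_\lambda\cR_s\otimes\cL^{-s}$ corresponds to $(\alpha,\beta)$-weight $\nu = (-s,\lambda_1,\ldots,\lambda_s)$. Since $(\mu,\lambda)\in C$ forces $\lambda_s\ge\mu_s\ge 1$, the cyclic permutation of length $s$ that moves the leading entry of $\nu+\rho$ past $\lambda_1+s-1,\ldots,\lambda_s$ to the last slot is the unique $\omega$ making $\omega\bullet\nu$ dominant, and a direct subtraction of $\rho$ gives $\omega\bullet\nu = (\lambda_1-1,\ldots,\lambda_s-1,0) = \tilde\lambda$. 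Bott therefore produces $\rR^s(\pi_2)_*(S_\lambda\cR_s\otimes\cL^{-s}) = S_{\tilde\lambda}\cR_{s+1}$ with all other $\rR^j(\pi_2)_*$ vanishing. Using $S_{\lambda^T/\mu^T}W = S_{\tilde\lambda^T/\tilde\mu^T}W$ from the notation preceding the lemma and the projection formula, we obtain
\[
\rR^s(\pi_2)_*\bR_i = \bigoplus_{\substack{(\mu,\lambda)\in C \\ |\lambda|=i}} S_{\tilde\lambda}\cR_{s+1}\otimes S_{\tilde\mu^T}\cQ_{s+1}^*\otimes S_{\tilde\lambda^T/\tilde\mu^T}W.
\]
Since $|\tilde\lambda|=i-s$ and each pair $(\tilde\lambda,\tilde\mu)$ satisfies $\tilde\mu\subseteq\tilde\lambda\subseteq(s+1)\times(n-s-1)$ together with $\tilde\mu\subseteq(s+1)\times(d-s-1)$, Cauchy's formula identifies each such summand with a direct summand of $\bQ_{i-s}=(\bQ_\bullet[-s])_i$, and distinct $(\tilde\lambda,\tilde\mu)$ give non-isomorphic summands. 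The same semisimplicity argument as above then upgrades this term-wise inclusion to an injection of complexes $\rR^s(\pi_2)_*\bR_\bullet\injects\bQ_\bullet[-s]$.

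The main obstacle I anticipate is the bookkeeping of the Bott computation --- in particular, tracking the sign conventions for the weights attached to $\cL^{-s}$ and to the dual quotient bundle $\cQ_{s+1}^*$, and verifying that the cyclic permutation appearing in the $\pi_2$-computation has length exactly $s$ under the constraint $\lambda_s\ge 1$ forced by $l(\mu)=s$. The subtler issue that the term-wise identifications intertwine the induced differentials with the Koszul differentials on $\bP_\bullet$ and $\bQ_\bullet$ is handled by $GL(L)\times GL(W)$-equivariance together with the multiplicity-one occurrence of each relevant irreducible summand.
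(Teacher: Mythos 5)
Your termwise Bott computations are correct and they identify $(\pi_1)_*\bR_i$ and $\rR^s(\pi_2)_*\bR_i$ with the right bundles; the $\pi_2$-side is in fact a genuinely different route from the paper, which goes through relative Serre duality ($\rR^s(\pi_2)_*\bR_\bullet\cong((\pi_2)_*(\bR_\bullet^*\otimes\omega))^*$ with $\omega=\bigwedge^s\cR_s\otimes\cL^{-s}$) rather than a direct Bott calculation, and both methods are legitimate.

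The gap is in the upgrade from termwise identifications to \emph{maps of complexes}. You assert that ``each irreducible summand appears with multiplicity one'' so the splitting respects the differentials, but this is false: the Cauchy factors $S_{\lambda^T/\mu^T}W$ are skew Schur functors, hence reducible, and their Littlewood--Richardson multiplicities $c^{\lambda^T}_{\mu^T,\nu}$ can exceed one; distinct pairs $(\lambda,\mu)$ can also contribute isomorphic irreducibles. So multiplicity one does not hold, and equivariance alone does not force the projection onto the $l(\mu)=s$ part to commute with the Koszul differential. This is exactly where the paper does real work. For the surjection, the paper never argues directly about a direct-sum decomposition of $\bP_\bullet$: it pulls back $\bP_\bullet$ to the flag variety, forms the two-step quotient $\pi_1^*\bP_\bullet\surjects\bR'_\bullet\surjects\bR_\bullet$, and obtains $\bP_\bullet\surjects(\pi_1)_*\bR_\bullet$ as a pushforward of a morphism of complexes, using $\rR^1(\pi_1)_*\bN_\bullet=0$ (projection formula for a projective bundle) for surjectivity and $(\pi_1)_*\bK_\bullet=0$ (via global generation on fibres) to pass from $\bR'_\bullet$ to $\bR_\bullet$. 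That construction hands you compatibility with differentials for free. For the injection, the paper identifies the differential on $((\pi_2)_*(\bR_\bullet^*\otimes\omega))^*$ explicitly as the dual of comultiplication and then appeals to the combinatorial observation that the Koszul differential can only decrease the number of rows of $\lambda$, which is the precise (and correct) form of the compatibility argument you are gesturing at. You should either reproduce that row-count argument for both maps, or adopt the paper's push--pull construction, rather than rely on a multiplicity-one claim that does not hold.
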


\begin{proof}
To show $ \bP_{\bullet} \surjects (\pi_1)_* \bR_{\bullet} $, we apply $(\pi_1)_*$ to the short exact sequence:
\begin{equation*}
0 \longrightarrow \bN_{\bullet} \longrightarrow \pi_1^* (\bP_{\bullet}) \longrightarrow \mathbf{R'}_{\bullet} \longrightarrow 0
\end{equation*}
where $\bN_{\bullet}$ is the kernel whose $i$th term is:
\begin{equation*}
\bN_i = \dsum_{\substack{\mu \subseteq \lambda \subseteq s \times (n-s) \\ \mu \subseteq (s-1) \times (d-s) \\ |\lambda| = i}} S_{\lambda} \cR_s \otimes S_{\mu^T} \cQ_s^* \otimes S_{\lambda^T / \mu^T} W.
\end{equation*}
Note that $\bN_{\bullet} = \pi_1^* (\mathbf{N'}_{\bullet})$ where:
\begin{equation*}
\mathbf{N'}_i = \dsum_{\substack{\mu \subseteq \lambda \subseteq s \times (n-s) \\ \mu \subseteq (s-1) \times (d-s) \\ |\lambda| = i}} S_{\lambda} \cR_s \otimes S_{\mu^T} \cQ_s^* \otimes S_{\lambda^T / \mu^T} W.
\end{equation*}
Since $\pi_1$ is a projective bundle, the structure sheaf of $\mathbf{Flag}(s,s+1,L)$ has no $R^1$ under $\pi_1$. By projection formula, we see that $\rR^1 (\pi_1)_* \bN_i = 0$ for all $i$ and $(\pi_1)_* \pi_1^* \bP_{\bullet} = \bP_{\bullet}$. So according to the long exact sequence associated to $(\pi_1)_*$, we have $\bP_{\bullet} \surjects (\pi_1)_* \bR'_{\bullet}$.

Now look at another short exact sequence:
\begin{equation*}
0 \longrightarrow \bK_{\bullet} \longrightarrow \mathbf{R'}_{\bullet} \longrightarrow \bR_{\bullet} \longrightarrow 0
\end{equation*}
To show $(\pi_1)_* \bK_i = 0$, it is enough to show that $H^0((\mathbf{Flag}(s,s+1,L))_y,\bK_{i,y}) = 0$ for all $y \in \mathbf{Gr}(s,L)$ by Semicontinuity Theorem. Since they are all homogeneous vector bundles, it is enough to check the case when $y$ is identity in $G/P$. Set $\cE = S_{\lambda} \cR_s \otimes S_{\tilde{\mu}^T} \cQ_{s+1}^* \otimes \cL^{-s} \otimes S_{\lambda^T / \mu^T} W$ be a homogeneous vector bundle on $\mathbf{Flag}(s,s+1,L)$. Note that $\pi_1$ realizes $\mathbf{Flag}(s,s+1,L)$ as a $\mathbb{P}^{d-s-1}$ bundle over $\mathbf{Gr}(s,L)$, in this case, when looking at the fiber over identity, since $\cE_y$ is globally generated over $\mathbb{P}^{d-s-1}$ where $y$ is the identity, we have a natural surjection $(\pi_1)_* \cE \otimes \cO_{\mathbb{P}^{d-s-1}} \surjects \cE$. So we have the following short exact sequence
\begin{equation*}
0 \longrightarrow \mathcal{K}_{\lambda, \mu} \longrightarrow (\pi_1)_* \cE \otimes \cO_{\mathbb{P}^{d-s-1}} \longrightarrow \cE \longrightarrow 0. 
\end{equation*}
By construction , $(\pi_1)_*$ applied to the surjection induces an isomorphism and hence $(\pi_1)_* \mathcal{K}_{\lambda, \mu} = 0$. Moreover, $\bK_i = \dsum_{\substack{\mu \subseteq \lambda \subseteq s \times (n-s) \\ \mu \subseteq s \times (d-s) \\ l(\mu) = s \\ |\lambda| = i}} \mathcal{K}_{\lambda, \mu}$ by looking at the corresponding $P$ module. Therefore $(\pi_1)_* \bK_i = 0$ for all $i$. This implies $(\pi_1)_*$ applied to $\mathbf{R'}_{\bullet} \surjects \bR_{\bullet}$ induces an isomorphism, completing the proof of $ \bP_{\bullet} \surjects (\pi_1)_* \bR_{\bullet} $.

To show $ \cR^s (\pi_2)_* \bR_{\bullet} \injects \bQ_{\bullet}[-s]$, we use Serre duality. By Serre Duality, $\cR^s (\pi_2)_* \bR_{\bullet} \cong ((\pi_2)_* (\bR_{\bullet}^* \otimes \omega))^*$ where $\omega$ is the canonical line bundle of the map $\pi_2$. So $\omega = \bigwedge^s \cR_s \otimes \cL^{-s}$.

By direct computation using Borel-Weil, the $i$th term of $((\pi_2)_* (\bR_{\bullet}^* \otimes \omega))^*$ is given by
\begin{equation*}
\dsum_{\substack{\tilde{\mu} \subseteq \tilde{\lambda} \subseteq s \times (n-s-1) \\ \tilde{\mu} \subseteq s \times (d-s-1) \\ |\lambda| = i-s }} S_{\tilde{\lambda}} \cR_{s+1} \otimes S_{\tilde{\mu}^T} \cQ_{s+1}^* \otimes S_{\tilde{\lambda}^T / \tilde{\mu}^T} W.
\end{equation*}
The differential between them is given by the dual of comultiplication coming from exterior algebra, which is the same as the differential for $\bQ_\bullet[-s]$. This is a subcomplex of $\bQ_{\bullet}[-s]$ because the number of rows of a partition can only decrease when you apply the differential and the differentials between these two complexes are compatible.
So this finishes the proof of $ \cR^s (\pi_2)_* \bR_{\bullet} \injects \bQ_{\bullet}[-s]$.
\end{proof}

\begin{cor} \label{cor:eqns}
The minimal defining equations for $\cK_{1,d,n}$ are
\begin{equation*}
\bF_1^1 = \dsum_{s=1}^d \dsum_{\substack{\mu \subseteq (s-1) \times (d-s) \\ \lambda = (d-s+1, \mu_1 + 1, \ldots, \mu_{s-1} + 1)}} \bigwedge^d L \otimes S_{\lambda^T / \mu^T} W \otimes A(-|\lambda|- \frac{s(s-1)}{2}).
\end{equation*}
The projective dimension of $\cK_{1,d,n}$ is $d(n-d) - d + 1$ and its regularity is $\frac{d(d+1)}{2} - 1$.
\end{cor}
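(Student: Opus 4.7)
The plan is first to read off the minimal generators from Proposition~\ref{prop:ses} specialized to $s = 1$, and then to derive the projective dimension and regularity from the long exact sequence of Theorem~\ref{thm:les}, combined with bounds on each $\tilde{\cO}_{s,d,n}$ furnished by the geometric approach of Theorem~\ref{thm:kempf}.

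\medskip

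First I would specialize Proposition~\ref{prop:ses} to $s = 1$. There $\bF_0^1 = A$, so $\bF_1^1 \to \bF_0^1 = A$ gives a minimal free presentation whose image is the ideal $I_{1,d,n}$. The first summand in the expression of $\bF_s^s$ vanishes when $s = 1$ because the indexing set $\mu \subseteq 0 \times (d-1)$ forces $\mu = \lambda = \emptyset$ and the required cohomology group $H^{-1}$ is zero. The remaining (second) summand, after renaming $k$ to $s$, is exactly the formula asserted for $\bF_1^1$ in the Corollary.

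\medskip

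For the upper bounds on projective dimension and regularity, I would break the long exact sequence of Theorem~\ref{thm:les} into a chain of short exact sequences and iterate the standard inequalities $\text{pd}(A) \leq \max(\text{pd}(B), \text{pd}(C) - 1)$ and $\text{reg}(A) \leq \max(\text{reg}(B), \text{reg}(C) + 1)$, yielding $\text{pd}(\cO_{1,d,n}) \leq \max_s (\text{pd}(B_s) - s + 1)$ and $\text{reg}(\cO_{1,d,n}) \leq \max_s (\text{reg}(B_s) + s - 1)$. Theorem~\ref{thm:kempf} gives a resolution of $\tilde{\cO}_{s,d,n}$ with $\bF_i$ supported in internal degree at most $i + s(d-s)$ and in homological degree $i \leq s(n-d)$, since $\xi$ has rank $s(n-s)$ on $\Gr(s,L)$ of dimension $s(d-s)$. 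Thus $\text{pd}(B_s) \leq s(n-d)$ and $\text{reg}(B_s) \leq s(d-s) + \frac{s(s-1)}{2}$. Both maxima are attained at $s = d$, yielding $\text{pd}(\cO_{1,d,n}) \leq d(n-d) - d + 1$ and $\text{reg}(\cO_{1,d,n}) \leq \frac{d(d+1)}{2} - 1$.

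\medskip

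For the matching lower bounds, the regularity is realized already at homological position $1$ by the summand of $\bF_1^1$ indexed by $s = d$ and $\mu = \emptyset$: here $\lambda = (1^d)$, contributing a generator in internal degree $d + \frac{d(d-1)}{2} = \frac{d(d+1)}{2}$. For projective dimension, I would form the total complex of the free resolutions of $B_1, \ldots, B_d$ with the horizontal differentials induced by Theorem~\ref{thm:les}; this is quasi-isomorphic to $\cO_{1,d,n}$ concentrated in degree zero. The topmost term of the Koszul resolution of $B_d = \tilde{\cO}_{d,d,n}(-\tfrac{d(d-1)}{2})$ is $(\det L)^{n-d} \otimes (\det W)^d \otimes A\bigl(-d(n-d) - \tfrac{d(d-1)}{2}\bigr)$, sitting in homological position $d(n-d) - d + 1$ of the total complex. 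The main obstacle is showing this representation survives minimalization: for $n > d + 1$, the inequality $(d-s)(n-d-1) > 0$ at $s < d$ pushes the only possibly competing contributions (from $B_s$'s resolutions at the same or adjacent total-complex position) strictly above their own projective dimension $s(n-d)$, so they are zero; combined with a Bott computation of $H^{s(d-s)}(\det \xi_s)$ showing that the determinantal character $(\det L)^{n-d} \otimes (\det W)^d$ does not appear among the $GL(L) \times GL(W)$-components of the top terms of the $B_s$ for $s < d$, no equivariant differential can cancel it, and hence $\text{Tor}^A_{d(n-d)-d+1}(\cO_{1,d,n}, K) \neq 0$. The edge case $n = d+1$ is the hypersurface situation where both formulas reduce to the known values.
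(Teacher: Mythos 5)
Your proposal fills in the details that the paper leaves implicit (its own proof is a one-line citation of Theorem~\ref{thm:les} and Proposition~\ref{prop:ses}), and the overall strategy --- read off $\bF_1^1$ from Proposition~\ref{prop:ses} at $s=1$, then obtain the projective dimension and regularity by breaking the long exact sequence into short ones and bounding each $B_s$ via Theorem~\ref{thm:kempf} --- is the right one, and the numerics all check out. Two points deserve caution, though. First, your stated justification for $\mathrm{pd}(B_s)\le s(n-d)$ (``since $\xi$ has rank $s(n-s)$ on $\Gr(s,L)$ of dimension $s(d-s)$'') does not establish this bound: the rank and dimension counts only give $i+j\le s(n-s)$ and $0\le j\le s(d-s)$, which permit $i$ as large as $s(n-s)$, not $s(n-d)$. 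What one actually needs is that $\tilde{\cO}_{s,d,n}$ is Cohen--Macaulay of codimension $s(n-d)$ --- the standard consequence of the vanishing of higher direct images (rational singularities) invoked in Section~\ref{subsec:kaldefn}, combined with Auslander--Buchsbaum; this is what yields $\mathrm{pd}(\tilde{\cO}_{s,d,n})=s(n-d)$. Second, the total-complex argument for the lower bound on projective dimension is workable but has loose ends: the inequality $(d-s)(n-d-1)>0$ only kills competitors at the \emph{same} total degree $d(n-d)-d+1$; at the adjacent degree $d(n-d)-d$ one needs $(d-s)(n-d-1)>1$, so the boundary case $s=d-1$, $n=d+2$ --- and also the $s=d$ term $\bigwedge^{d(n-d)-1}(L\otimes W)$ one step below the top of the Koszul complex --- must be disposed of separately by noting that their internal degrees (and $GL(L)\times GL(W)$-characters) differ from that of $(\det L)^{n-d}\otimes(\det W)^d$, as you gesture at with the Bott computation. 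A cleaner route to the same lower bound is to iterate the short exact sequences $0\to C_s\to\tilde{\cO}_{s,d,n}\to C_{s+1}(-s)\to 0$ of Proposition~\ref{prop:ses}: since $\mathrm{Tor}^A_p(\tilde{\cO}_{s,d,n},K)=0$ for $p>s(n-d)$, the connecting homomorphism gives an injection $\mathrm{Tor}^A_p(C_{s+1},K)\hookrightarrow\mathrm{Tor}^A_{p-1}(C_s,K)$ in that range, and a downward induction from $\mathrm{pd}\,C_d=d(n-d)$ gives $\mathrm{pd}\,C_1\ge d(n-d)-d+1$ with no character bookkeeping at all. Your regularity lower bound (the $s=d$, $\mu=\emptyset$ generator of $\bF_1^1$ in internal degree $\frac{d(d+1)}{2}$) and both upper bounds are correct as stated.
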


\begin{proof}
This follows from Theorem~\ref{thm:les} and Proposition~\ref{prop:ses}.
\end{proof}

\section{Equations of Kalman Variety as $d \times d$ Minors} \label{sec:eqn}

In this section, we are going to show that the minimal defining equations we got from Corollary~\ref{cor:eqns} can be identified as $d \times d$ minors of reduced Kalman matrix. This implies that the ideal $I_{1,d,n}$ is prime when char$(K) = 0$. In order to do this, we need the following lemma.

\begin{lem}
The representations of minimal defining equations of $\cK_{1,d,n}$ given by Corollary~\ref{cor:eqns} can be interpreted as quotients of $\bigwedge^d L \otimes (\bigwedge^{a_0} W \oplus \bigwedge^{a_1} W(-1) \oplus \ldots \oplus \bigwedge^{a_{d-1}} W(-d+1))$ where $a_i$ are nonnegative integers and $\sum_{i} a_i = d$.
\end{lem}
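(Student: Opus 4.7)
The plan is to translate the indexing data $(s, \mu)$ of the summands of $\bF_1^1$ in Corollary~\ref{cor:eqns} into ``row-type'' data $(a_0, \ldots, a_{d-1})$ that will parametrize $d \times d$ minors of the reduced Kalman matrix in Section~\ref{sec:eqn}, and then to invoke the quotient description of skew Schur functors from Section~\ref{subsec:schur}.

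Fix a summand of $\bF_1^1$ indexed by $s \in \{1, \ldots, d\}$ and $\mu \subseteq (s-1) \times (d-s)$, so that $\lambda = (d-s+1, \mu_1+1, \ldots, \mu_{s-1}+1)$. Padding $\mu$ with a trailing zero so it has length $s$, I would set $a_{i-1} := \lambda_i - \mu_i$ for $1 \le i \le s$ and $a_i := 0$ for $s \le i \le d-1$. Explicitly this gives $a_0 = d-s+1-\mu_1$ and $a_i = \mu_i + 1 - \mu_{i+1}$ for $1 \le i \le s-1$; these are nonnegative, and a telescoping sum collapses to $\sum_{i=0}^{d-1} a_i = (d-s+1) + (s-1) = d$, matching the condition in the lemma.

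By the description in Section~\ref{subsec:schur}, the skew Schur $S_{\lambda^T/\mu^T} W$ is a quotient of $\bigotimes_{i=0}^{s-1} \bigwedge^{a_i} W = \bigotimes_{i=0}^{d-1} \bigwedge^{a_i} W$ (the tail factors contribute trivially since $\bigwedge^0 W = K$). To match the internal grading shift $-|\lambda| - s(s-1)/2$ appearing in Corollary~\ref{cor:eqns}, I would combine $|\lambda| = d + |\mu|$ with the telescoping identity
\[
  |\mu| + \tfrac{s(s-1)}{2} = \sum_{i=0}^{s-1} i \cdot a_i = \sum_{i=0}^{d-1} i \cdot a_i,
\]
so that $-|\lambda| - s(s-1)/2 = -d - \sum_i i \cdot a_i$. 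This is exactly the shift obtained when the $i$-th tensor factor $\bigwedge^{a_i} W$ is read in internal grading $i$ (matching the fact that in the reduced Kalman matrix the block $\gamma \alpha^i$ contributes rows whose entries are polynomials of degree $i+1$), while $\bigwedge^d L$ accounts for the degree-$d$ column determinant. Reassembling produces the quotient description in the form asserted.

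There is no serious obstacle to this argument: it is essentially a bookkeeping exercise, consisting of verifying nonnegativity and the sum formula for the $a_i$, invoking the skew Schur quotient from Section~\ref{subsec:schur}, and checking the telescoping identity for the grading shift. The only delicacy is interpreting the lemma's notation, which I read as a tensor product of the shifted exterior powers (the condition $\sum_i a_i = d$ only pins down an internal degree under the tensor convention); once that convention is fixed, the argument runs directly and sets up the identification with $d \times d$ minors to be carried out in Section~\ref{sec:eqn}.
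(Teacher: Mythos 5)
Your proposal is correct and essentially reproduces the paper's own argument: set $a_{i-1} = \lambda_i - \mu_i$ (padding $\mu$ with a trailing zero), verify $\sum_i a_i = |\lambda| - |\mu| = d$ and $\sum_i i\,a_i = |\mu| + \tfrac{s(s-1)}{2}$ for the grading shift, and invoke the quotient description of skew Schur functors from Section~\ref{subsec:schur}. The paper proves the weighted-sum identity ``by induction on size of $\mu$'' where you telescope directly (equivalent), and you correctly read the lemma's $\oplus$ as a tensor product, which is the only interpretation compatible with the skew Schur quotient.
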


\begin{proof}
Fix $\mu \subseteq (s-1) \times (d-s)$, notice that we have $\sum_i (\lambda_i - \mu_i) = |\lambda| - |\mu| = d$. And by induction on size of $\mu$, we can also get $\sum_i ((i-1)(\lambda_i - \mu_i)) = |\mu| + \frac{s(s-1)}{2}$. Then according to interpretation of skew Schur functors in Section~\ref{subsec:schur}, we have $\bigwedge^d L \otimes S_{\lambda^T / \mu^T} W \otimes A(-|\lambda|- \frac{s(s-1)}{2}) = \bigwedge^d L \otimes S_{\lambda^T / \mu^T} W \otimes A(-|\mu| - \frac{s(s-1)}{2} - d)$ is a quotient of $\bigwedge^d L \otimes (\bigwedge^{\lambda_1 - \mu_1} W \oplus \bigwedge^{\lambda_2 - \mu_2} W(-1) \oplus \ldots \oplus \bigwedge^{\lambda_s - \mu_s} W (-s+1))$. 
\end{proof}

In this case, we can identify all minimal defining equations of $\cK_{1,d,n}$ as $GL(L) \times GL(W)$-representations with some $d \times d$ minors of the reduced Kalman matrix as $GL(L) \times GL(W)$-representations.

To show that those $d \times d$ minors are actually minimal defining equations of $\cK_{1,d,n}$, we need to show further that they do not generate each other. To do this, we need the following lemma.

\begin{lem} \label{lem:relation}
Assume $A, \alpha$ are two $d \times d$ matrices. Then we have $\mathrm{tr}(\bigwedge\limits^i \alpha) \mathrm{det}(A) = \sum\limits_{\substack{I \subseteq \{ 1,\ldots,d \} \\ |I| = i}} \mathrm{det}(A_I)$ where $A_I$ is the matrix obtained by replacing the $i$th row of $A$ by the $i$th row of $A \alpha$ for all $i \in I$.
\end{lem}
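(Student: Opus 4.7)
The plan is to introduce a formal variable $t$ and to compute $\det(A + tA\alpha)$ in two different ways, then compare the coefficient of $t^i$ on both sides.

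First I would expand by the multilinearity of the determinant in its rows. The $j$th row of $A + tA\alpha$ is $r_j + t(r_j\alpha)$, where $r_j$ denotes the $j$th row of $A$. Choosing the $t(r_j\alpha)$ summand on exactly the rows indexed by a set $I \subseteq \{1,\ldots,d\}$ and $r_j$ on the remaining rows contributes $t^{|I|}\det(A_I)$, so
\[
\det(A + tA\alpha) \;=\; \sum_{I \subseteq \{1,\ldots,d\}} t^{|I|}\,\det(A_I).
\]

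Next I would factor $A + tA\alpha = A(I_d + t\alpha)$ and use multiplicativity of the determinant together with the classical identity
\[
\det(I_d + t\alpha) \;=\; \sum_{i=0}^{d} t^i\,\mathrm{tr}\bigl(\bigwedge\nolimits^i \alpha\bigr),
\]
which expresses the (reversed) characteristic polynomial of $\alpha$ in terms of traces of its exterior powers. Combining, $\det(A + tA\alpha) = \det(A)\sum_i t^i\,\mathrm{tr}(\bigwedge^i \alpha)$. Matching the coefficient of $t^i$ on each side yields the claim.

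The whole argument is essentially a one-line manipulation, and I do not anticipate a serious obstacle. The only ingredient beyond bare definitions is the identity $\det(I_d + t\alpha) = \sum_i t^i\,\mathrm{tr}(\bigwedge^i \alpha)$, which is standard; it can be verified by triangularizing $\alpha$ over the algebraic closure and reducing to the diagonal case, where it follows from $\prod_j(1+t\lambda_j) = \sum_i t^i e_i(\lambda_1,\ldots,\lambda_d)$ together with $e_i(\lambda_1,\ldots,\lambda_d) = \mathrm{tr}(\bigwedge^i \alpha)$.
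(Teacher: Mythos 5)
Your proof is correct, and it is genuinely different from (and in fact cleaner than) the paper's. The paper argues entirely inside exterior algebra: it writes down the comultiplication $\Delta \colon \bigwedge^d U \to \bigwedge^{d-i} U \otimes \bigwedge^i U$ and the multiplication $p$ back, computes the trace of the composite $p \circ (\bigwedge^{d-i} A \otimes \bigwedge^i(A\alpha)) \circ \Delta$ to be $\sum_{|I|=i}\det(A_I)$, and then factors this map through $\mathrm{Id}\otimes\bigwedge^i\alpha$ and $\bigwedge^{d-i}A\otimes\bigwedge^iA$, using that $p\circ\Delta = \binom{d}{i}\cdot\mathrm{Id}$ to divide out and recover $\mathrm{tr}(\bigwedge^i\alpha)\det(A)$. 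Your version instead introduces a formal parameter $t$ and evaluates $\det(A + tA\alpha)$ once by multilinearity in the rows, giving $\sum_I t^{|I|}\det(A_I)$, and once via the factorization $A(I_d + t\alpha)$ and the identity $\det(I_d + t\alpha) = \sum_i t^i\,\mathrm{tr}(\bigwedge^i\alpha)$; matching coefficients finishes the proof. What this buys you: the generating-function argument is a one-liner, avoids any trace computations on wedge powers, and, because $\det(I_d+t\alpha)=\sum_i t^i\,\mathrm{tr}(\bigwedge^i\alpha)$ is a polynomial identity with integer coefficients, it works verbatim over any commutative ring, whereas the paper's route inverts $\binom{d}{i}$ and so implicitly uses characteristic zero (harmless in context, since the whole paper assumes $\mathrm{char}\,K=0$, but still a real difference). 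The one small polish worth making is that you need not pass to the algebraic closure to verify $\det(I_d+t\alpha)=\sum_i t^i\,\mathrm{tr}(\bigwedge^i\alpha)$: it is a universal identity in the entries of $\alpha$, so it suffices to check it for the generic matrix (or for diagonal matrices and appeal to Zariski density, which is what your triangularization argument amounts to).
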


\begin{proof}
Let $U$ be a vector space of dimension $d$. The matrix $A$ defines a map $A \colon U \rightarrow U$. And the matrix $\alpha$ also defines a map $\alpha \colon U \rightarrow U$. Now we want to call the map on $\bigwedge^i U$ inducing from $A$ to be: $\bigwedge^i A \colon \bigwedge^i U \rightarrow \bigwedge^i U$. We will be looking at the following two diagrams:
\[\xymatrixcolsep{5pc}\xymatrix{
  \bigwedge^d U \ar[d]^{\Delta} &  & \\
  \bigwedge^{d-i} U \otimes \bigwedge^i U \ar[r]^{\bigwedge^{d-i} A \otimes \bigwedge^i (A \alpha)} & \bigwedge^{d-i} U \otimes \bigwedge^i U \ar[d]^{p} & \\
  & \bigwedge^d U  & \\
} \]
and
\[\xymatrixcolsep{5pc}\xymatrix{
  \bigwedge^d U \ar[d]^{\Delta} & \bigwedge^d U \ar[d]^{\Delta} &  & \\
  \bigwedge^{d-i} U \otimes \bigwedge^i U \ar[r]^{\bigwedge^{d-i} A \otimes \bigwedge^i A } & \bigwedge^{d-i} U \otimes \bigwedge^i U \ar[d]^{p} \ar[r]^{\mathrm{Id} \otimes \bigwedge^i \alpha } & \bigwedge^{d-i} U \otimes \bigwedge^i U \ar[d]^p \\
  & \bigwedge^d U  & \bigwedge^d U \\
} \]
Here the map $\Delta$ is comultiplication and $p$ is multiplication. Now the map $p \circ ( \bigwedge^{d-i} A \otimes \bigwedge^i (A \alpha) ) \circ \Delta$ is a linear map between a 1-dimensional space. So it is defined by multiplication of a number. If we trace the map, the number is exactly $\sum\limits_{\substack{I \subseteq \{ 1,\ldots,d \} \\ |I| = i}} \mathrm{det}(A_I)$.

The calculation is as follows. We will just do the case when $i = 1$ and other cases are similar. Let $e_1,\dots,e_d$ be a basis of $U$, then 
\begin{align*}
p \circ ( \bigwedge^{d-1} A \otimes A \alpha ) \circ \Delta (e_1 \wedge \ldots \wedge e_d) & = p \circ ( \bigwedge^{d-1} A \otimes A \alpha ) (\sum_{j=1}^d (-1)^{d-j} e_1 \wedge \ldots \wedge \hat{e_j} \wedge \ldots \wedge e_d \otimes e_j) \\
										 & = p (\sum_{j=1}^d (-1)^{d-j} A e_1 \wedge \ldots \wedge \hat{A e_j} \wedge \ldots \wedge A e_d \otimes A \alpha e_j ) \\
										 & = \sum_{j=1}^d A e_1 \wedge \ldots \wedge A \alpha e_j \wedge \ldots \wedge A e_d \\
										 & = \sum\limits_{\substack{I = \{ j \} \\ j = 1}}^d \mathrm{det}(A_I) e_1 \wedge \ldots \wedge e_d.
\end{align*}
So the number is exactly $\sum\limits_{\substack{I = \{ j \} \\ j = 1}}^d \mathrm{det}(A_I)$.

On the other hand, the map $p \circ \Delta$ is multiplication by $\binom{d}{i}$. Name its inverse to be $\frac{1}{\binom{d}{i}} \colon \bigwedge^d U \rightarrow \bigwedge^d U$. This is multiplication by $\frac{1}{\binom{d}{i}}$. If we trace the map $p \circ (\bigwedge^{d-i} A \otimes \bigwedge^i A) \circ \Delta$, it is defined by multiplication of the number $\binom{d}{i} \cdot \mathrm{det}(A)$. If we trace the map $p \circ ( \mathrm{Id} \otimes \bigwedge^i \alpha ) \circ \Delta$, it is defined by multiplication of the number $\mathrm{tr}(\bigwedge\limits^i \alpha)$. Therefore we have
\begin{align*}
p \circ ( \bigwedge^{d-i} A \otimes \bigwedge^i (A \alpha) ) \circ \Delta & = p \circ ( \mathrm{Id} \otimes \bigwedge^i \alpha ) \circ ( \bigwedge^{d-i} A \otimes \bigwedge^i A ) \circ \Delta \\
									  & = (p \circ (\mathrm{Id} \otimes \bigwedge^i \alpha ) \circ \Delta) \circ (\frac{1}{\binom{d}{i}}) \circ (p \circ ( \bigwedge^{d-i} A \otimes \bigwedge^i A ) \circ \Delta).
\end{align*}
By composing those three maps as in the last part of the equation, we get $p \circ ( \bigwedge^{d-i} A \otimes \bigwedge^i (A \alpha) ) \circ \Delta$ is also defined by multiplication of $\mathrm{tr}(\bigwedge\limits^i \alpha) \mathrm{det}(A)$, completing the proof.
\end{proof}

Now we finish the section by the following corollary.

\begin{cor} \label{cor:mineqn}
If char($K$) = 0, $I_{1,d,n}$ is prime. Furthermore, if we identify those $d \times d$ minors in reduced Kalman matrix, which as $\mathrm{GL}(L) \times \mathrm{GL}(W)$ representations, are the same as 
\begin{equation*}
\dsum_{s=1}^d \dsum_{\substack{\mu \subseteq (s-1) \times (d-s) \\ \lambda = (d-s+1, \mu_1 + 1, \ldots, \mu_{s-1} + 1)}} \bigwedge^d L \otimes S_{\lambda^T / \mu^T} W \otimes A(-|\lambda|- \frac{s(s-1)}{2})
\end{equation*}
then they define a minimal set of defining equations of $\cK_{1,d,n}$.
\end{cor}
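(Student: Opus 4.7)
The plan is to prove that $I_{1,d,n} = I(\cK_{1,d,n})$; primeness then follows since $\cK_{1,d,n}$ is irreducible, and the ``furthermore'' assertion becomes a matter of identifying specific $d \times d$ minors with the minimal generators enumerated in Corollary~\ref{cor:eqns}. The inclusion $I_{1,d,n} \subseteq I(\cK_{1,d,n})$ is automatic because $I_{1,d,n}$ cuts out $\cK_{1,d,n}$ set-theoretically. For the reverse inclusion, in characteristic $0$ both ideals are $GL(L) \times GL(W)$-stable and decompose into isotypic components by semisimplicity, so it suffices to check that every minimal generator representation $\bigwedge^d L \otimes S_{\lambda^T/\mu^T} W$ listed in Corollary~\ref{cor:eqns} actually lies in $I_{1,d,n}$.

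First I would apply the preceding Lemma to express each such generator as a quotient of $\bigwedge^d L \otimes \bigotimes_{k=0}^{d-1} \bigwedge^{a_k} W$, placed in degree $d + \sum_k k \cdot a_k = |\lambda| + s(s-1)/2$, where $a_k = \lambda_{k+1} - \mu_{k+1}$ for $0 \leq k \leq s-1$ (and $a_k = 0$ otherwise). Next, for each tuple $(a_0, \ldots, a_{d-1})$ with $\sum_k a_k = d$, I would construct a $GL(L) \times GL(W)$-equivariant map into $I_{1,d,n}$ by sending a decomposable tensor to the $d \times d$ determinant of the reduced Kalman matrix obtained by taking all $d$ columns of $L$ and, for each $k$, selecting $a_k$ rows out of the $\gamma \alpha^k$ block according to the input $\bigwedge^{a_k} W$ factor (identifying $W = (V/L)^*$). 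This map lands in $I_{1,d,n}$ in the correct degree by construction.

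The crucial step is verifying that this equivariant map factors through the skew Schur quotient and produces a nonzero element in the $\bigwedge^d L \otimes S_{\lambda^T/\mu^T} W$ isotypic. The straightening relations $R(\lambda/\mu, W)$ defining the skew Schur quotient in Section~\ref{subsec:schur} are built from the comultiplication-multiplication maps $\theta$, and Lemma~\ref{lem:relation} supplies exactly the determinantal identity needed: applying a $\theta$-type operation to the determinantal output amounts to multiplication of a single $d \times d$ minor by a power sum $\mathrm{tr}(\bigwedge^i \alpha)$ in the entries of $\alpha$, so the straightening relations hold modulo products with lower-grading polynomials. Nontriviality of the projection onto the Schur isotypic can be verified by evaluating on a highest-weight vector, where the minor reduces to a nonzero monomial in the generic matrix entries.

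Having established $I_{1,d,n} = I(\cK_{1,d,n})$ and hence primeness, minimality of the identified set of minors is immediate: the construction matches each irreducible component of $\bF_1^1$ with the image of a specific collection of $d \times d$ minors, and the minimality of $\bF_1^1$ in the minimal free resolution prevents any of these from being redundant. The principal obstacle throughout is the factorization step: one must check that Lemma~\ref{lem:relation}'s trace identity really does realize the $\theta$ operators appearing in the skew Schur relations, with correct signs and combinatorics, uniformly as the row profile $(a_k)$ varies over all admissible tuples.
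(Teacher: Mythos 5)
Your overall strategy is genuinely different from the paper's, and it contains a concrete error. You propose to establish $I_{1,d,n} = I(\cK_{1,d,n})$ directly by realizing each minimal generator representation inside $I_{1,d,n}$, hinging on the claim that the determinantal equivariant map $\bigwedge^d L \otimes \bigotimes_k \bigwedge^{a_k} W \to I_{1,d,n}$ ``factors through the skew Schur quotient.'' This is false. Take $d=2$, $s=2$, $a_0=a_1=1$, so the target quotient is $\bigwedge^2 L \otimes \mathrm{Sym}^2 W$ and the kernel of the quotient is $\bigwedge^2 L \otimes \bigwedge^2 W$. Applying your map to the antisymmetrizer gives, by direct computation (or by Lemma~\ref{lem:relation} with $i=1$), exactly $\det(\gamma)\,\mathrm{tr}(\alpha)$, which is a nonzero element of $I_{1,2,n}$. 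So the map does not kill the straightening relations at all; it sends them to nonzero, non-minimal elements. Your hedge ``modulo products with lower-grading polynomials'' concedes this, but then the argument no longer yields the containment $I(\cK_{1,d,n}) \subseteq I_{1,d,n}$: the determinant map produces \emph{some} copy of each representation type inside $I_{1,d,n}$, not the specific graded subspace that Corollary~\ref{cor:eqns} locates as minimal generators of $I(\cK_{1,d,n})$, and you need the latter.

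In fact you are using Lemma~\ref{lem:relation} backwards relative to the paper. The paper's proof goes in the opposite direction: it does not try to verify that the map factors, but rather shows that the chosen minors cannot generate one another. The key observation is that the only $\bigwedge^d L$-isotypic multipliers available are invariants $\mathrm{tr}(\bigwedge^i \alpha)$, and Lemma~\ref{lem:relation} says that multiplying a chosen minor by such an invariant produces exactly a $\theta$-type combination of minors, which by construction falls into the kernel of the relevant skew Schur quotient (or lies in a different representation type). Having established that the identified minors form a minimal set inside $I_{1,d,n}$, the paper compares with the degree/representation profile of the minimal generators of $I(\cK_{1,d,n})$ furnished by Corollary~\ref{cor:eqns} and concludes by a counting argument; reducedness and primeness of $I_{1,d,n}$ then come out at the end, rather than being proved at the outset. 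To salvage your approach you would need to replace the factoring claim by its correct negation and essentially reproduce this independence-plus-counting argument, at which point you have the paper's proof.
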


\begin{proof}
Now we split the proof into two parts. In the first part, we are going to show those $d \times d$ minors in reduced Kalman matrix as stated in the above Corollary form a set of minimal equations in $I_{1,d,n}$. In the second part, we will show they define a minimal set of defining equations of $\cK_{1,d,n}$.

In order to show those equations are a set of minimal equations, we will show that they do not generate each other in $I_{1,d,n}$. Note that we are only using $\alpha$ and $\gamma$ in the reduced Kalman matrix. So we are done if we can show that the $GL(L) \times GL(W)$ representations we got in Corollary~{\ref{cor:eqns}} will not land in each other when we tensor it with $\mathrm{Sym}^{\bullet}(L \otimes L^* \oplus L \otimes W) = \mathrm{Sym}^{\bullet} (L \otimes L^*) \otimes \mathrm{Sym}^{\bullet} (L \otimes W)$. Also notice that the representations we got in Corollary~{\ref{cor:eqns}} are $\bigwedge^d L$ as $GL(L)$ representations. Therefore we only have to deal with tensoring it with $\mathrm{Sym}^{\bullet} (L \otimes L^*)$. Moreover, since $L$ is $d$-dimensional, the only $\bigwedge^d L$ we can get in $\bigwedge^d L \otimes \mathrm{Sym}^{\bullet} (L \otimes L^*)$ are when we tensor $\bigwedge^d L$ with the ring of invariants $\mathrm{Sym}^{\bullet} (L \otimes L^*)^{GL(L)}$. It is a well-known fact that the ring of invariants is a polynomial ring in $\mathrm{tr}(\bigwedge^i \alpha)$ for $i=1, \ldots, d$.

Now if we take any minors which is in $\bigwedge^d L \otimes (\bigwedge^{a_0} W \oplus \bigwedge^{a_1} W(-1) \oplus \ldots \oplus \bigwedge^{a_{d-1}} W(-d+1))$, we can take $A$ to be the submatrix we are taking determinant, by the Lemma~{\ref{lem:relation}}, we see that the only linear combination of $d \times d$ minors of the reduced Kalman matrix that it could generate when multiplying with $\mathrm{tr}(\bigwedge^i \alpha)$ for $i=1, \ldots, d$ are in the image of the map 
\[
\xymatrix{
  \bigwedge^d L \otimes (\bigwedge^{a_0} W \oplus \bigwedge^{a_1} W(-1) \oplus \ldots \oplus \bigwedge^{a_{d-1}} W(-d+1) ) \ar[d]^{\Delta}  \\
  \sum\limits_{\substack{\sum\limits_{j = 0}^{d-1} b_j = i \\ 0 \leq b_j \leq a_j}} (\bigwedge^d L \otimes (\bigwedge^{a_0 - b_0} W \oplus \bigwedge^{a_1 - b_1 + b_0} W(-1) \oplus \ldots \\
  \oplus \bigwedge^{a_{d-1} - b_{d-1} + a_{d-2}} W(-d+1) \oplus \bigwedge^{b_{d-1}} W(-d) )) 
} 
\]
So they all either land in the kernel of the quotients which are the skew Schur functors we got in Corollary~{\ref{cor:eqns}} or outside the skew Schur functors. Hence those equations which we identified with those skew Schur functors cannot generate each other and they give a set of minimal equations inside $I_{1,d,n}$. 

Now they form a set of minimal equations in $I_{1,d,n}$. If those equations are not minimal defining equations of $\cK_{1,d,n}$, since they are contained in the defining ideal, in order to generate them, we need to either throw in equations of lower degree or increase the number of equations. This contradicts the result in Corollary~{\ref{cor:eqns}} giving the number of minimal generating equations in each degree.

Therefore, we have if char($K$) = 0, $I_{1,d,n}$ is reduced. And since each of them defines an irreducible variety set theoretically, they are prime.
\end{proof}

\small \noindent Hang Huang, Department of Mathematics,
University of Wisconsin Madison, Madison, WI 53706 \\
{\tt hhuang235@math.wisc.edu}, \url{http://math.wisc.edu/~hhuang235/}

\end{document}